\date{\today}
\begin{document}

\title[]
{An explicit   Koppelman formula for $dd^c$ and
Green currents on $\P^n$}

\def\la{\langle}
\def\ra{\rangle}
\def\End{{\rm End}}

\def\deg{\text{deg}\,}


\def\al{\text{\tiny $\aleph$}}
\def\w{\wedge}
\def\db{\bar\partial}
\def\dbar{\bar\partial}
\def\ddbar{\partial\dbar}
\def\loc{{\rm loc}}
\def\rank{\text{rank}}
\def\R{{\mathbb R}}
\def\C{{\mathbb C}}
\def\w{{\wedge}}
\def\P{{\mathbb P}}
\def\cp{{\mathcal C_p}}
\def\bl{{\mathcal B}}
\def\A{{\mathcal A}}
\def\B{{\mathcal B}}
\def\Cn{\C^n}
\def\ddc{dd^c}
\def\bbox{\bar\square}
\def\G{{\mathcal G}}
\def\od{\overline{D}}
\def\ot{\leftarrow}
\def\loc{\text{loc}}
\def\D{{\mathcal D}}
\def\M{{\mathcal M}}
\def\dbarb{\bar\partial_b}
\def\mr{\paren{-\rho}}
\def\Hom{{\rm Hom\, }}
\def\codim{{\rm codim\,}}
\def\Tot{{\rm Tot\, }}
\def\Im{{\rm Im\, }}
\def\Lop{{\mathcal L}}
\def\Nop{{\mathcal N}}
\def\K{{\mathcal K}}
\def\Ker{{\rm Ker\,  }}
\def\Dom{{\rm Dom\,  }}
\def\Z{{\mathcal Z}}
\def\E{{\mathcal E}}
\def\can{{\rm can}}
\def\Ok{{\mathcal O}}
\def\Pop{{\mathcal P}}
\def\L{{\mathcal L}}
\def\Q{{\mathcal Q}}
\def\Re{{\rm Re\,  }}
\def\Res{{\rm Res\,  }}
\def\Aut{{\rm Aut}}
\def\L{{\mathcal L}}
\def\U{{\mathcal U}}

\def\sr{\stackrel}

\newtheorem{thm}{Theorem}[section]
\newtheorem{lma}[thm]{Lemma}
\newtheorem{cor}[thm]{Corollary}
\newtheorem{prop}[thm]{Proposition}

\theoremstyle{definition}

\newtheorem{df}{Definition}

\theoremstyle{remark}

\newtheorem{preremark}{Remark}
\newtheorem{preex}{Example}

\newenvironment{remark}{\begin{preremark}}{\qed\end{preremark}}
\newenvironment{ex}{\begin{preex}}{\qed\end{preex}}

\numberwithin{equation}{section}

\date{\today}

\author{Mats Andersson}

\address{Department of Mathematics\\Chalmers University of Technology and the University of G\"oteborg\\
S-412 96 G\"OTEBORG\\SWEDEN}

\email{matsa@math.chalmers.se}

\subjclass{}

\keywords{}

\thanks{The author was
  partially supported by the Swedish Natural Science
  Research Council. The author is also grateful to the Institut Mittag-Leffler
(Djursholm, Sweden) where parts of this work was carried out.}

\begin{abstract} 
We compute a quite explicit   Koppelman formula for $dd^c$ on
projective space, and obtain
Green currents for positive closed currents.
\end{abstract}

\maketitle

\section{Introduction}

Let $X$ be a smooth projective variety and 
let $\theta$ be a positive closed $(p,p)$-current on $X$.
A  $(p-1,p-1)$-current $g$ is called a Green current for $\theta$ if
\begin{equation}\label{green}
dd^c g+\theta =A,
\end{equation}
where $A$ is a smooth form.

Green currents for  Lelong currents
$[Z]$ of analytic cycles $Z$  are  of fundamental importance in Arakelov geometry,
see, e.g.,  \cite{GS}, \cite{BGS}, \cite{SABK}, \cite{BY1},
and the survey article \cite{Hein}. 
Green currents for more general
$\theta$ are used, e.g.,  in complex dynamics, see \cite{DiSi} and \cite{DiSi2}.

\smallskip

This paper is an elaboration of the second half\footnote{The first part 
is published as \cite{Apl}.} of  \cite{Aarxiv}.
We construct  a (positive)   integrable kernel $K(\zeta,z)$ of
bidegree $(n-1,n-1)$ that is smooth outside  the diagonal and of log type
along $\Delta$ (see Section~\ref{plsec} for the definition) 
and a smooth kernel $P(\zeta,z)$  of bidegree $(n,n)$,  such that
\begin{equation}\label{basform}
dd^c K+ [\Delta]=P
\end{equation}
in the current  sense;  i.e., $K$ is  a Green current for $[\Delta]$
on $\P^n_\zeta\times\P^n_z$. 
From \eqref{basform} we get, for smooth forms
$\theta$, the   Koppelman type formula
\begin{multline}\label{kman}
dd^c\int_\zeta K\w \theta +\theta=\\
\int P(\zeta,z)\w\theta(\zeta)
+d\int_\zeta K\w d^c\theta
-d^c\int_\zeta  K \w d\theta-
\int_\zeta K\w dd^c\theta
\end{multline}
for the $dd^c$-operator.
If $\theta$ is a  $(k,k)$-form such that $d\phi=0$, then  $d^c\theta=0$ as well, 
and if
\begin{equation}\label{skata}
g=\int_\zeta K(\zeta,z)\w\theta(\zeta),\quad  A=\int_\zeta P(\zeta,z)\w\theta(\zeta), 
\end{equation}
then $dd^c g+\theta=A$.

In the meantime we have learnt that such Green currents  $K$
were  obtained in  essentially the same way already in \cite{BGS}, Section~6, 
so our main contribution is 
that we compute our  kernels $K$ and $P$ quite explicitly.
If $\theta$ is the Lelong current $[Z]$ for some cycle $Z$, then the function
$g(z)$ in \eqref{skata} is well-defined and smooth in  $X\setminus Z$, and it 
is proved in \cite{BGS} (Lemma~1.2.2) that it is in fact a Green current
for $Z$  of log type along $Z$.
In \cite{DiSi} it is proved that the same formula provides a Green current
for any positive closed $(k,k)$-current $\theta$.
With our particular choice of $K$, the Koppelman formula extends to any 
current $\theta$, see below.

\smallskip
Our starting point is the following  result.

\begin{prop}\label{wmminusett}
Let $f$ be a holomorphic section of a Hermitian vector bundle $E\to X$ of rank $m$
over some manifold $X$.
If the zero cycle $Z$ of $f$ has codimension $m$, i.e., $f$ intersects the zero section
transversally,  then  there is a current $W_{m-1}$,
smooth outside $Z$ and of log type, such that
\begin{equation}\label{corett}
dd^c W_{m-1}+[Z]=c_m(D_E),
\end{equation}
where $c_m(D_E)$ is the top Chern form of $E$.
\end{prop}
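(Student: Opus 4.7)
The strategy is to construct $W_{m-1}$ explicitly by a Bott--Chern transgression that exploits the fact that on $X\setminus Z$ the section $f$ is nowhere zero, so it generates a trivialized holomorphic sub-line-bundle of $E$. Chern--Weil theory then produces a smooth explicit primitive for $c_m(D_E)$ away from $Z$, and the remaining work is to show that this primitive extends across $Z$ as a current of log type and to identify its distributional $\ddc$.

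Concretely, let $\Theta$ denote the curvature of the Chern connection $D_E$. On $X\setminus Z$ the section $f$ spans a holomorphic sub-line-bundle $L\subset E$ whose induced metric is $|f|^2$, so that $c_1(L)=-\ddc\log|f|^2$, with orthogonal complement isometric to $E/L$. Using the Whitney multiplicativity $c(L)c(E/L)=c(E)$ together with the explicit expression for $c_1(L)$, a standard Chern--Weil transgression yields a form $W_{m-1}^\circ$ of bidegree $(m-1,m-1)$, polynomial in $\log|f|^2$ and in the curvature, that satisfies $\ddc W_{m-1}^\circ=c_m(D_E)$ on $X\setminus Z$.

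One then needs local integrability and log type across $Z$. Since $f$ intersects the zero section transversally and $\codim Z=m$, at any smooth point of $Z$ one may choose local coordinates and a local frame of $E$ in which $f=(z_1,\dots,z_m)$; in such coordinates $\log|f|^2=\log(|z_1|^2+\cdots+|z_m|^2)$ is locally integrable, and every other term of $W_{m-1}^\circ$ involves at most $m-1$ factors of the closed form $\ddc\log|f|^2$ and admits the same integrability estimate. Hence $W_{m-1}^\circ$ extends to a current $W_{m-1}$ on $X$ of log type.

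The final and most delicate step is to compute $\ddc W_{m-1}$ as a current on all of $X$. I would regularize $\log|f|^2$ by $\log(|f|^2+\varepsilon)$ (or by inserting the factor $|f|^{2\lambda}$ and continuing analytically to $\lambda=0$), obtaining a smooth approximant $W_{m-1}^\varepsilon$ with $\ddc W_{m-1}^\varepsilon\to c_m(D_E)$ on $X\setminus Z$; the task is to show that the residual mass on $Z$ equals exactly $-[Z]$. This is the main obstacle. By the transversality hypothesis the claim localizes to the model case $X=\C^n$, $E=X\times\C^m$ with the Euclidean metric, and $f(z)=(z_1,\dots,z_m)$, where it reduces to the classical King--Bochner--Martinelli identity $(\ddc\log|z|^2)^m=[0]$ on $\C^m$, combined with the observation that all subleading transgression terms contain strictly fewer than $m$ factors of $\ddc\log|f|^2$ and hence carry no mass on the codimension-$m$ set $Z$.
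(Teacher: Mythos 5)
Your proposal follows essentially the same Bott--Chern transgression strategy that the paper cites ([BGS] \S6, [Apl], both tracing back to Bott--Chern): on $X\setminus Z$ the section $f$ trivializes a sub-line-bundle $S\subset E$ with quotient $Q=E/S$, one writes down an explicit primitive of $c_m(D_E)$ there as $\log(1/|f|)\,c_{m-1}(D_Q)$ plus a transgression correction $V$, and then one must extend this across $Z$ as a current and identify the singular part of its $dd^c$. Structurally this matches the paper's formula $W=\log(1/|f|)C(D_Q)-V$ from \eqref{wdef}. However, two steps in your sketch are genuinely incomplete.

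First, your description of the transgression form as ``polynomial in $\log|f|^2$ and in the curvature'' is not accurate. The Bott--Chern transgression between $c(D_E)$ and $c(D_S)c(D_Q)$ necessarily involves the second fundamental form of $S\subset E$, which appears in \eqref{wdef} as the non-closed factors $Df\wedge\dbar\sigma$ (in effect $\partial\log|f|^2\wedge\dbar\log|f|^2$ type terms). These are of a different singular nature than $\ddc\log|f|^2$, and this is precisely why your subsequent factor-counting argument does not by itself close the proof. Relatedly, you assert the extension is of log type, but you only argue local integrability of $\log|f|$; log type (in the sense of \eqref{logtyp}) is a structural statement about the pullback to a resolution, with closed forms $\gamma_j''$ in front of $\log|s_j|$, and this needs an actual verification (the paper's Lemma~\ref{lt}).

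Second, the final step --- that the residual mass of $dd^c W_{m-1}$ on $Z$ is exactly $-[Z]$ --- is the heart of the matter, and your sketch glosses over it. Reducing to the flat model and invoking King's formula $(\ddc\log|z|^2)^m=[0]$ on $\C^m$ is indeed the correct intuition in the transversal case, but one must justify (a) that the curvature terms, which multiply the singular factors, contribute nothing to the singular mass, and (b) that the transgression terms with factors $\partial\log|f|^2\wedge\dbar\log|f|^2$ also contribute nothing when one takes $dd^c$. ``Strictly fewer than $m$ factors of $\ddc\log|f|^2$'' is not by itself a reason, since these other singular factors are not of the form $\ddc\log|f|^2$. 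The paper sidesteps this entirely by using a resolution of singularities $\nu:\tilde X\to X$ in which $\nu^*f=f^0f'$ with $f^0$ a monomial and $f'$ nonvanishing: there, $\nu^*c(D_Q)$, $\nu^*V$, and $\nu^*(c(D_S)c(D_Q))$ are all smooth, so the current identity \eqref{CCC} can be read off and pushed forward, and the $[Z]$ term comes from $dd^c$ of $\log|f^0|$ in the resolution. You could make your regularization/King's formula route rigorous, but it requires the careful degree/support analysis you omit, and for non-transversal $f$ (which the paper's Proposition~\ref{matspl} allows) the model reduction is no longer available.
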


Thus  $W_{m-1}$ is a Green current for $[Z]$.
The proof (in  Section~6 in \cite{BGS} and in \cite{Apl})
is based on  ideas in  \cite{BC}. 
See Section~\ref{plsec} below for a discussion, and for an expression
for the current  $W_{m-1}$.

\smallskip

Let $z=[z_0,\ldots,z_n]$ be the usual homogeneous coordinates on $\P^n$ and let
$$
\omega=dd^c\log|z|
$$
be the standard K\"ahler form on $\P^n$ so that
\begin{equation}\label{pelly}
\int_{\P^n}\omega^n=1.
\end{equation}
We will see that 
$$
\eta=\sum_{j=0}^n z_j\frac{\partial}{\partial\zeta_j}
$$
is a holomorphic section of the bundle
$H=T^{1,0}(\P^n_\zeta)\otimes\Ok(-1)_\zeta\otimes\Ok(1)_z$ over 
$\P^n_\zeta\times\P^n_z$ that  vanishes to first order precisely on the
diagonal. From the standard metric on $\P^n$ we get
a natural Hermitian metric   $\|\ \|$ on $H$, and in particular, see Section~\ref{pn},
\begin{equation}\label{etanorm}
\|\eta\|^2=\frac{|\zeta|^2|z|^2-|z\cdot\bar\zeta|^2}{|z|^2|\zeta|^2}=
\frac{|\zeta\w z|^2}{|z|^2|\zeta|^2},
\end{equation}
which is like the square of the  distance  between the points
$[z]$ and $[\zeta]$ on $\P^n$.
From  Proposition~\ref{wmminusett}
we thus get  $K=W_{n-1}$ and  $P=c_n(D_H)$ so that 
\eqref{basform} holds.

We say  that a differential form $\xi(\zeta,z)$ on $\P^n_\zeta\times\P^n_z$ is invariant
if $\xi(\phi(\zeta),\phi(z))=\xi(\zeta,z)$ for each isometric automorphism
$\phi$ of $\P^n$, i.e.,  mapping induced by a orthogonal mapping on $\C^{n+1}$. 
For instance, $\|\eta\|$ is invariant.
Here is our main theorem.

\begin{thm}\label{stendumt} 
The kernels $P$ and $K$  so defined satisfy \eqref{basform}
and moreover: 

\smallskip
\noindent (i)\ The kernel $P$ is 
\begin{equation}\label{pek}
P=\sum_0^n \omega_z^k\w\omega_\zeta^{n-k},
\end{equation}
and it induces the orthogonal projection onto the harmonic forms.

\smallskip
\noindent (ii)\  $K$ is positive, of log type,  and it can be written
\begin{equation}\label{korm}
K=  \sum_{j=0}^{n-1}  \Big[\log(1/\|\eta\|)\frac{\gamma_j^1}{\|\eta\|^{2j+2}}  +
 \frac{\gamma_j^2}{\|\eta\|^{2j+2}}\Big],
\end{equation}
where
$\gamma_j^i$  are smooth (real-analytic) invariant
forms on $\P^n_\zeta\times \P^n_z$ which are $\Ok(\|\eta\|^2)$.  
In particular, $K(\zeta,z)$ is invariant.

\smallskip
\noindent (iii) The Koppelman formula \eqref{kman} holds  for all currents $\theta$.
\end{thm}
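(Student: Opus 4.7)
I would treat the three claims in sequence.

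For (i), since $P=c_n(D_H)$ and $H=T^{1,0}(\P^n_\zeta)\otimes\Ok(-1)_\zeta\otimes\Ok(1)_z$, the splitting principle together with $c_1(\Ok(-1)_\zeta)=-\omega_\zeta$ and $c_1(\Ok(1)_z)=\omega_z$ gives
$$
c_n(H)=\sum_{k=0}^n c_{n-k}(T^{1,0}(\P^n_\zeta))\w(\omega_z-\omega_\zeta)^k.
$$
The Euler sequence on $\P^n$ yields $c_j(T^{1,0}(\P^n_\zeta))=\binom{n+1}{j}\omega_\zeta^j$. Substituting and expanding $(\omega_z-\omega_\zeta)^k$ binomially, the coefficient of $\omega_z^j\w\omega_\zeta^{n-j}$ becomes an alternating binomial sum that collapses to $1$, yielding \eqref{pek}. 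The projection property is then immediate by bidegree: for a smooth $(p,p)$-form $\theta$ on $\P^n_\zeta$, only the summand with $k=p$ survives the fibre integration, so
$$
\int_\zeta P\w\theta=\Big(\int_{\P^n}\omega^{n-p}\w\theta\Big)\,\omega_z^p,
$$
which by Hodge theory on $\P^n$ together with the normalization \eqref{pelly} is exactly the orthogonal projection of $\theta$ onto the harmonic forms; forms of mixed bidegree project to zero for obvious degree reasons.

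For (ii), the plan is to substitute the concrete data $(H,\eta,D_H)$ into the explicit Bott-Chern transgression formula for $W_{m-1}$ from Section \ref{plsec}. The Fubini-Study metric makes the Chern connections on $T^{1,0}(\P^n_\zeta)$, $\Ok(-1)_\zeta$ and $\Ok(1)_z$ all invariant under the action of the unitary group, the section $\eta$ is equivariant, and \eqref{etanorm} is manifestly invariant; this supplies invariance of every ingredient, hence of $K$. The transgression formula involves iterated combinations of $\log\|\eta\|^2$, $D\eta/\|\eta\|^2$, $\dbar\eta/\|\eta\|^2$ and the curvature tensor; grouping the resulting terms according to their pole order in $\|\eta\|^{-2}$ and according to whether they carry a logarithmic factor produces the decomposition \eqref{korm}. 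Positivity and log type are already provided by Proposition \ref{wmminusett}; the assertion that each $\gamma_j^i$ is $\Ok(\|\eta\|^2)$ will follow because $\eta$ vanishes to first order transversally along $\Delta$, so that each $D\eta$-factor appearing in a numerator contributes an extra vanishing order that is not absorbed by the explicit denominator $\|\eta\|^{2j+2}$.

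For (iii), since $K$ is positive and of log type, hence locally integrable, the fibre integrals $\int_\zeta K\w\theta$, $\int_\zeta K\w d\theta$ and $\int_\zeta K\w\ddc\theta$ are well defined in the current sense for any $\theta$. I would regularize by convolving over the compact unitary group: let $\chi_\epsilon$ be an approximate identity on the group of isometric automorphisms and set $\theta_\epsilon=\chi_\epsilon*\theta$. Then $\theta_\epsilon$ is smooth, positivity is preserved, and $\theta_\epsilon\to\theta$ weakly. Because $K$ is invariant, convolution commutes with the fibre integration against $K$, so \eqref{kman} applied to $\theta_\epsilon$ passes to the limit term by term (the $K$-terms by dominated convergence combined with invariance, the $P$-term trivially since $P$ is smooth, and the outer $d,d^c,\ddc$ by continuity of derivatives on currents). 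This gives \eqref{kman} for arbitrary $\theta$.

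The main obstacle is clearly Part (ii): disentangling the general Bott-Chern transgression for $W_{n-1}$ in the specific case of the bundle $H$ and section $\eta$, and recognizing, after collecting terms, the clean structure \eqref{korm} with invariant $\gamma_j^i=\Ok(\|\eta\|^2)$. Parts (i) and (iii) are essentially formal once the right setup is in place.
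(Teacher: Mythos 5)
Your plan for (i) is correct and genuinely different from the paper's. The paper computes $c_n(D_H)$ directly at the form level via the $\int_e$ formalism together with the interior-multiplication identities $\delta_{\bar\zeta}\delta_\zeta(-\al\,d\zeta\cdot e\w d\bar\zeta\cdot e^*)_{k+1}$ and $\delta_{\bar\zeta}\delta_\zeta\beta^{k+1}$, whereas you argue cohomologically via the splitting principle and the Euler sequence identity $c_j(T^{1,0}\P^n)=\binom{n+1}{j}\omega^j$. Your approach is cleaner for (i) alone, but note it needs the (true but not free) fact that the Whitney and tensor formulas hold at the \emph{form} level for the Fubini--Study metrics, not merely in cohomology; the paper's direct computation sidesteps this and is anyway reused for (ii), so nothing is saved globally.

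In (ii), the outline (substitute the explicit data into \eqref{wdef} via Lemma~\ref{ee} and sort by pole order and logarithmic factor) is the paper's plan, but your explanation of why $\gamma_j^i=\Ok(\|\eta\|^2)$ is wrong. You attribute the extra vanishing to the $D\eta$-factors because ``$\eta$ vanishes to first order transversally along $\Delta$,'' but $D\eta$ does \emph{not} vanish on $\Delta$: $D\eta\sim dz\cdot e-\frac{z\cdot\bar\zeta}{|\zeta|^2}d\zeta\cdot e$ is nowhere zero, and in fact each $D\eta\w\dbar\sigma$ factor \emph{raises} the pole, contributing $\tau/\|\eta\|^2$. The true source of the $\Ok(\|\eta\|^2)$ is the single factor $\eta\w\sigma$, which after passing to the standard frame by Lemma~\ref{ee} becomes $\zeta\cdot e\w\bar\zeta\cdot e^*\w z\cdot e\w\bar z\cdot e^*/|\zeta\w z|^2$, whose numerator is $\Ok(|\zeta\w z|^2)=\Ok(|\zeta|^2|z|^2\|\eta\|^2)$. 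This is the key cancellation; your heuristic misidentifies it.

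There is a genuine gap in (iii). You assert that $\int_\zeta K\w\theta$ ``is well defined in the current sense for any $\theta$'' because $K$ is locally integrable, but local integrability of $K$ only lets you pair it against \emph{measures} (order-zero currents), not against arbitrary distributions. For a general current $\theta$ the operator must be \emph{defined} by duality, $\langle\int_\zeta K\w\theta,\psi\rangle=\langle\theta,\int_z K(\zeta,z)\w\psi(z)\rangle$, and this only makes sense if $\psi\mapsto\int_z K\w\psi$ carries smooth forms to smooth forms. Your convolution argument does not escape this: after unwinding, $\langle\int_\zeta K\w\theta_\epsilon,\psi\rangle=\langle\theta_\epsilon,\int_z K\w\psi\rangle$, and passing to the limit $\epsilon\to 0$ already presupposes that $\int_z K\w\psi$ is regular enough to be paired with $\theta$. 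So the regularization reduces to exactly the regularity statement it was meant to avoid. The paper proves this regularity directly: fix $p$, use invariance of $K$ and a smooth family $\phi_z$ of isometric automorphisms exchanging $z$ and $p$ (defined off an exceptional hyperplane), and write $\int_\zeta K(\zeta,z)\w\psi(\zeta)=\int_\xi K(\xi,p)\w\phi_z^*\psi(\xi)$, which is visibly smooth in $z$. That smooth family of isometries is the missing ingredient in your argument, and it is where the invariance of $K$ established in (ii) is actually used.
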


It is well-known that
the harmonic forms (with respect to $\omega$) are precisely the forms
$\alpha \omega^k$ for $\alpha\in\C$,   and   a $(k,k)$-current 
$\xi$ is orthogonal to the harmonic forms if and only if
$$
\int_{\P^n} \xi\w\omega^{n-k}=0.
$$
In particular,  any form that is $d$-exact or $d^c$-exact must
be orthogonal to the harmonic forms. 
In view of \eqref{pelly} and   \eqref{pek},  $P$
induces the orthogonal projection.

Thus if $Z$ is a cycle of pure codimension  $p$ in $\P^n$ with degree
$$
\deg Z=\int_{\P^n} [Z]\w \omega^{n-p},
$$
then
\begin{equation}\label{alvar}
g=\int_\zeta K(\zeta,z)\w [Z]
\end{equation}
solves the Green equation
$$
dd^c g+[Z]=\deg(Z)\omega^p.
$$

\smallskip

In \cite{BY1} and \cite{BY2}  a Green current for $[Z]$ in $\P^n$ is
obtained by means of 
the Levine form for the subspace $z=w$ in $\P^{2n+1}$;
the Green current appears as the value at the origin of a current-valued  analytic
function, involving expressions for homogeneous forms $f_j$
that define $Z$. Such a representation is  the purpose in  those  paper(s). 
It seems that  the resulting current
can be represented by an integral operator with integrable kernel;
however we do not know  whether one can compute this  kernel explicitly.



\begin{remark}
In  \cite{BGS},  p.~913, it is proved that for  any
smooth projective variety $X$  there exists a Green current
$K$  for the diagonal $\Delta$ in $X\times X$
such that $K$ is smooth outside $\Delta$  and of log type.
In particular,  $K$ is integrable on $X\times X$. Thus one gets a 
Koppelman formula on $X$, though not explicit, and a Green current 
for any cycle $Z$ in $X$.
\end{remark}

In this paper, 
$$
d^c=\al(\dbar-\partial), \quad \al=i/2\pi, 
$$
so that
$$
dd^c=2\al\partial\dbar=\frac{i}{\pi}\partial\dbar.
$$

\section{A generalized Poincar\'e-Lelong formula}\label{plsec}

We first discuss a slightly more general  form of Proposition~\ref{wmminusett}.
Let $E\to X$ be a Hermitian vector bundle 
of rank $m$ over a complex (compact) manifold $X$,
let $D_E$ be the Chern connection on $E$,  and  let  $c(D_E)$ be the
associated Chern form, i.e., $c(D_E)=\det(\al\Theta_E+I)$, 
where $\Theta_E=D^2_E$ is the curvature tensor.
We let $c_k(D_E)$ denote the component of $c(D_E)$ of bidegree $(k,k)$.

Let $f$ be   a holomorphic section of $E$, and
assume that $Z=\{f=0\}$ has codimension $p$. 
Then we have  an analytic $p$-cycle 
$$
Z^p=\sum \alpha_j Z_j^p,
$$
where $Z_j^p$ are the irreducible components of $\{f=0\}$ and
$\alpha_j$ are the Hilbert-Samuel multiplicities of $f$.
Moreover, $f$  defines a trival line bundle
$S$ over $X\setminus Z$, and we let $Q=E/S$ be the quotient bundle, 
equipped with the induced 
Hermitian metric, and let $D_Q$ and $c(D_Q)$
be the corresponding Chern connection and Chern form, respectively.

\begin{prop}\label{matspl} 
The form $c(D_Q)$ is locally integrable in $X$ and 
the corresponding current $C(D_Q)$ is closed  in  $X$.
There is  an explicit current $W_{p-1}$ of bidegree $(p-1,p-1)$ and 
of order zero in $X$,   smooth in $X\setminus Z$ and
of logarithmic type along $Z$,   such that
\begin{equation}\label{bas}
dd^cW_{p-1} + [Z^p] = c_p(D_E)-C_p(D_Q).
\end{equation}
If in addition, $E^*$ is Nakano negative, 
one can choose $W_{p-1}$ to be positive.
\end{prop}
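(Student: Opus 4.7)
My plan is to exploit the short exact sequence of Hermitian bundles that $f$ provides off $Z$, write $W_{p-1}$ as a Bott--Chern secondary form there, and then use an analytic continuation argument to extend everything across $Z$ and produce the current $[Z^p]$.

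Away from $Z$ the section $f$ trivializes a line subbundle $S\hookrightarrow E$ with induced metric $|f|^2$, and the quotient $Q=E/S$ inherits the metric used in the statement. Since $S$ is metrically trivial outside $Z$, its Chern form is simply $\ddc\log|f|^2$ in the $(1,1)$-part (and $1$ in the $(0,0)$-part, higher parts vanish). By the Bott--Chern construction of \cite{BC} applied to the two metric-induced connections on $E|_{X\setminus Z}$, namely $D_E$ and the direct-sum connection $D_S\oplus D_Q$, there is a canonical secondary form $\tilde c$ satisfying
\begin{equation*}
\ddc\tilde c \;=\; c(D_E)-c(D_S)\,c(D_Q)
\end{equation*}
on $X\setminus Z$. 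I would define $W_{p-1}$ (following \cite{Apl} and Section~6 of \cite{BGS}) as the $(p-1,p-1)$-component of this transgression, written explicitly in terms of $f$, $|f|^2$ and $\Theta_E$; the explicit expression shows that $W_{p-1}$ is smooth on $X\setminus Z$ and, as a function of the radial variable $|f|$, has the form (smooth)$\,\cdot\log|f|^2 + $ (smooth)$\,\cdot\sum |f|^{-2k}$(wedges of $\partial f,\bar\partial f$). Combined with codimension $p$ transversality this yields the log-type and local integrability claims.

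To promote the identity to a global current equation and produce $[Z^p]$, the key step is analytic regularization. I would consider the current-valued holomorphic function
\begin{equation*}
\lambda \;\longmapsto\; |f|^{2\lambda} W_{p-1},
\end{equation*}
defined for $\Re\lambda\gg 0$. Off $Z$ one has the pointwise identity $\ddc W_{p-1}=c_p(D_E)-c_p(D_Q)$, so multiplying by $|f|^{2\lambda}$ and applying $\ddc$ gives, for $\Re\lambda\gg 0$,
\begin{equation*}
\ddc\!\bigl(|f|^{2\lambda}W_{p-1}\bigr) + R^\lambda \;=\; |f|^{2\lambda}\bigl(c_p(D_E)-c_p(D_Q)\bigr),
\end{equation*}
where $R^\lambda$ gathers the $d|f|^{2\lambda}\w d^c W_{p-1}$, $d^c|f|^{2\lambda}\w dW_{p-1}$ and $\ddc|f|^{2\lambda}\w W_{p-1}$ terms. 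By a Hironaka resolution of $\{f=0\}$ all four current-valued functions extend meromorphically in $\lambda$ to a neighborhood of $0$, with values at $\lambda=0$ that are independent of the resolution. Passing to $\lambda=0$ one reads off simultaneously that $c_p(D_Q)$ is locally integrable, that $W_{p-1}$ is of order zero, and that
\begin{equation*}
R^\lambda\big|_{\lambda=0} \;=\; [Z^p],
\end{equation*}
the latter being the generalized King / Poincar\'e--Lelong computation that identifies the residue with the cycle carrying its Hilbert--Samuel multiplicities $\alpha_j$. Closedness of $C(D_Q)$ follows because $dC(D_Q)=0$ off $Z$ and $C(D_Q)$ is of order zero with support-free singular locus of real codimension $\geq 2p$.

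For positivity under the Nakano negativity hypothesis on $E^*$, I would show that in this case the explicit expression for $W_{p-1}$ resulting from the Bott--Chern construction is a sum of non-negative terms. This follows the pattern in \cite{BC}: the curvature $\Theta_{E^*}$ enters $W_{p-1}$ through forms of the type $\langle\Theta_{E^*}\xi,\xi\rangle$, which are negative when $E^*$ is Nakano negative, and an overall sign inversion coming from dualization makes the resulting coefficient positive. The main obstacle, and the step I would expect to have to do most carefully, is the analytic continuation and the identification $R^\lambda|_{\lambda=0}=[Z^p]$ with the correct multiplicities; once this is in place the rest of the proposition is essentially bookkeeping with the Bott--Chern transgression.
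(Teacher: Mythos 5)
Your framework is the right one, but the transgression you define does not solve the right equation, and this is where the proof fails. The Bott--Chern secondary form for $D_E$ versus $D_S\oplus D_Q$ satisfies $\ddc\tilde c = c(D_E)-c(D_S)\,c(D_Q)$, so off $Z$ its $(p-1,p-1)$-component gives
\begin{equation*}
\ddc\,\tilde c_{p-1}=c_p(D_E)-c_p(D_Q)-c_1(D_S)\w c_{p-1}(D_Q),
\end{equation*}
and $c_1(D_S)=\ddc\log(1/|f|)$ is a nonzero smooth form off $Z$. What \eqref{bas} requires off $Z$ is $\ddc W_{p-1}=c_p(D_E)-c_p(D_Q)$, so a correction term is unavoidable; the paper takes $W=\log(1/|f|)\,C(D_Q)-V$, where $V$ is the polynomial transgression satisfying $-\ddc V=c(D_E)-C(D_S)C(D_Q)$ in the current sense. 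The added $\log(1/|f|)\,C(D_Q)$ piece is also the sole source of the $\log|f|^2$ factor you describe (a pure polynomial transgression has no $\log$), and after a Hironaka resolution $\nu$ with $\nu^*f=f^0 f'$ ($f^0$ a monomial, $f'$ nonvanishing) it is precisely this term whose $\ddc$ produces $[Z^p]$ by Poincar\'e--Lelong on the monomial $f^0$. Your $\lambda$-regularization is a legitimate substitute for the paper's route, which simply pushes forward the pointwise identity from the resolution without any meromorphic continuation; but without the $\log(1/|f|)\,C(D_Q)$ piece inside $W_{p-1}$, the residue $R^\lambda|_{\lambda=0}$ you compute will not be $[Z^p]$, and the equation you land on will retain the stray $c_1(D_S)\w c_{p-1}(D_Q)$.

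Your positivity argument is also a gap. The polynomial transgression terms in the explicit $W_{p-1}$ are not individually nonnegative even under Nakano negativity of $E^*$; only the coefficient of $\log(1/|f|)$, namely $C_{p-1}(D_Q)$, is manifestly a positive current in that case. The paper uses the freedom in ``one can choose'': since $C_{p-1}(D_Q)$ is closed, replacing $W_{p-1}$ by $W_{p-1}+\alpha\,C_{p-1}(D_Q)$ does not change \eqref{bas}, and for $\alpha$ large enough the positive closed term dominates the indefinite transgression part. That modification is the content of the positivity claim, not a termwise sign analysis of the Bott--Chern form.
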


Since $Q$ has rank $m-1$, $C_m(D_Q)=0$ and thus we get
\eqref{corett}.
Proposition~\ref{matspl}  is a special case of
a  more general formula in \cite{Apl} (Theorems~1.1 and 1.2).
For the statement about logarithmic type, see Lemma~\ref{lt} below.

\begin{remark}
In \cite{Apl} it is stated that one can choose $W_{p-1}$ positive  where
$|f|<1$, given that $E^*$ is Nakano negative. 
Since $X$ is assumed to be compact 
here, we can modify the metric in $E$ so that $|f|\le 1$ everywhere.
\end{remark}

We shall now  describe  $W_{p-1}$ following  the presentation in \cite{Apl},
and we refer to \cite{Apl} for more details and arguments.
We   introduce the exterior algebra bundle
$\Lambda=\Lambda(T^*(X)\oplus E\oplus E^*)$. Any 
section $\xi\in\E_{k,q}(X,E)$,
i.e., smooth $(k,q)$-form with values in $E$,
corresponds to a section $\tilde\xi$ of $\Lambda$:
If $\xi=\xi_1\otimes e_1+\ldots +\xi_m\otimes e_m$ in a local frame
$e_j$ for $E$, then we let 
$\tilde\xi=\xi_1 \w e_1+\ldots +\xi_m \w e_m$.
In the same way,   $a\in\E_{k,q}(X,\End E)$ 
is  identified with
$$
\widetilde a=\sum_{jk} a_{jk} \wedge e_j\wedge e^*_k,
$$
where  $e_j^*$ is the dual frame, if  $a=\sum_{jk} a_{jk}\otimes e_j\otimes e^*_k$
with respect to these frames.
The connection $D_E$  extends in a unique  way to a linear  mapping
$D\colon \E(X,\Lambda)\to\E(X,\Lambda)$ which is
an anti-derivation with respect to the
wedge product in $\Lambda$, and acts   as 
the exterior differential $d$ on the $T^*(X)$-factor.
If  $\xi$ is a form-valued section of  $E$, then
$
\widetilde{D_E\xi}=D\tilde\xi,
$
and if  $a\in\E_k(X,\End E)$, then 
\begin{equation}\label{lapp}
\widetilde{D_{\End E}a}=D\widetilde a,
\end{equation}
see, e.g., \cite{Apl}.
Since $D_{\End E}I_E=0$, here $I$ denotes the identity endomorphism on $E$, and
by Bianchi's  identity, $D_{\End E}\Theta_E=0$, we have from 
\eqref{lapp} that 
\begin{equation}\label{enol}
D\widetilde\Theta_E=0 \quad \text{and}\quad  D\tilde I=0.
\end{equation}
We let  $\tilde I_m=\tilde I^m/m!$  and  use the same notation 
for other forms in the sequel.
Any form $\omega$ with values in $\Lambda$ can be  written 
$\omega=\omega'\w\tilde I_m+\omega''$ uniquely, 
where $\omega''$ has lower  degree in $e_j,e^*_k$. If we make the definition
\begin{equation}\label{fermionic}
\int_e\omega =\omega',
\end{equation}
then this integral is linear and 
\begin{equation}\label{skodon}
d\int_e\omega=\int_e D\omega.
\end{equation}
We have that 
\begin{equation}\label{laban}
c(D_E)=\int_e(\al\widetilde{\Theta}_E+\tilde I_E)_m=
\int_e e^{\al\widetilde{\Theta}_E+\tilde I_E}.
\end{equation}
Recall that $D_E=D_E'+\dbar$, where  $D'_E$  is the    $(1,0)$-part of $D_E$. 
It follows that we also have the decomposition $D=D'+\dbar$.

Let $\sigma$ be the section of $E^*$ over $X\setminus Z$ with minimal norm such that
$f\cdot\sigma=1$. 
In   $X\setminus Z$ we have the  formula 
(Proposition~4.2 in \cite{Apl})
\begin{equation}\label{cq}
c(D_Q)=
\int_e f\w\sigma \w (\tilde I+\al\tilde\Theta
-\al Df\w\dbar\sigma)_{m-1}.
\end{equation}

In a suitable resolution of singularities 
$\nu\colon\tilde X\to X$ we may assume that 
$\nu^*f=f^0f'$, where $f^0$ is (locally)
a holomorphic function, in fact a monomial in
suitable coordinates,  and 
$f'$ is a non-vanishing section (of $\nu^*E)$.  
Then $\nu^*\sigma=(1/f^0)\sigma'$,  where
$\sigma'$ is smooth, and it follows that
$$
\nu^*\big(\sigma\w f\w (Df\w\dbar\sigma)^{k-1}\big)=
\sigma'\w f'\w  (Df'\w\dbar\sigma')^{k-1}
$$
is smooth. Since  $c(D_Q)$ is closed in $X\setminus Z$,
thus $\nu^*c(D_Q)$ has a smooth and closed extension across
the singularity. In particular it is locally integrable, therefore
its push-forward is locally integrable, so we have:

\begin{lma} The form  $c(D_Q)$ is locally integrable  and its natural
current extension $C(D_Q)$ is closed.
\end{lma}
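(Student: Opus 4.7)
The plan is to reduce the claim to a smoothness statement on a suitable modification of $X$. First I would invoke Hironaka to produce a proper modification $\nu\colon\tilde X\to X$ that is a biholomorphism over $X\setminus Z$ and locally principalizes the ideal generated by the components of $f$: on each chart of $\tilde X$ we may write $\nu^*f=f^0 f'$ with $f^0$ a holomorphic monomial and $f'$ a non-vanishing section of $\nu^*E$. Since $\sigma$ is characterized by $f\cdot\sigma=1$ together with minimality of norm, its pull-back has the form $\nu^*\sigma=(1/f^0)\sigma'$ with $\sigma'$ smooth (the dual to $f'$ with respect to the pulled-back metric, up to a unit).

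Next I would substitute these factorizations into the formula \eqref{cq} for $c(D_Q)$. The leading factor $f\wedge\sigma$ pulls back to $f'\wedge\sigma'$, which is smooth. In each factor $Df\wedge\dbar\sigma$ I would compute $\nu^*Df=df^0\wedge f'+f^0 Df'$ and $\nu^*\dbar\sigma=(1/f^0)\dbar\sigma'$, which gives $\nu^*(Df\wedge\dbar\sigma)=(df^0/f^0)\wedge f'\wedge\dbar\sigma'+Df'\wedge\dbar\sigma'$. The singular first summand disappears after wedging with the leading factor $f'\wedge\sigma'$, because $f'\wedge f'=0$ in the exterior algebra over $E$; hence $\nu^*\bigl(\sigma\wedge f\wedge(Df\wedge\dbar\sigma)^{k-1}\bigr)=\sigma'\wedge f'\wedge(Df'\wedge\dbar\sigma')^{k-1}$ is smooth, and therefore $\nu^*c(D_Q)$ extends across the exceptional divisor to a smooth form on $\tilde X$.

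Finally, local integrability of $c(D_Q)$ on $X$ follows by pairing with a test form $\phi$ on $X$, pulling back via the proper map $\nu$, and using the change-of-variables formula: the resulting integral of $\nu^*c(D_Q)\wedge\nu^*\phi$ is finite because $\nu^*c(D_Q)$ is smooth. For closedness, $\nu^*c(D_Q)$ is smooth on all of $\tilde X$ and closed on the dense open complement of the exceptional divisor (since the original $c(D_Q)$ is closed on $X\setminus Z$), hence closed everywhere on $\tilde X$ by continuity. Properness of $\nu$ then gives $dC(D_Q)=d\nu_*\nu^*c(D_Q)=\nu_*d\nu^*c(D_Q)=0$ as currents on $X$. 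The main delicate point, which would demand careful bookkeeping, is the cancellation via $f'\wedge f'=0$ and the attendant signs in the fermionic integral $\int_e$ defined in \eqref{fermionic}--\eqref{laban}; once that is in hand, everything else is formal.
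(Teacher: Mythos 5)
Your argument is correct and follows the paper's own proof: resolve singularities so that $\nu^*f=f^0f'$ with $f^0$ a monomial and $f'$ non-vanishing, deduce $\nu^*\sigma=(1/f^0)\sigma'$, show the pull-back of the integrand in \eqref{cq} is smooth, and push forward. The only difference is that you spell out the cancellation $f'\wedge f'=0$ that makes $\nu^*\bigl(\sigma\wedge f\wedge(Df\wedge\dbar\sigma)^{k-1}\bigr)=\sigma'\wedge f'\wedge(Df'\wedge\dbar\sigma')^{k-1}$ hold, a step the paper states without comment; this is a useful clarification but not a new idea.
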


By the usual Poincare-Lelong formula we have
$$
c(D_S)=1+dd^c\log(1/|f|)
$$
outside $Z$, i.e., $1+dd^c\log(1/|f'|)$ in the resolution. It follows that
also $c(D_S)\w c(D_Q)$ is locally integrable. If
capitals denote the natural current extensions, then
\begin{equation}\label{CCC}
-dd^cV=c(D_E)-C(D_S)C(D_Q),
\end{equation}
where $V$ is the locally integrable form
\begin{equation}\label{vformel}
V=\sum_{\ell=1}^{m-1}\frac{(-1)^{\ell}}{2\ell}
\int_e f\wedge\sigma\wedge (\tilde I+\al\tilde\Theta-\al Df\w\dbar\sigma)_{m-1-\ell}
\wedge (-\al Df\w\dbar\sigma)_\ell.
\end{equation}
Finally, if 
$$
W= \log(1/|f|)    C(D_Q)-V,
$$
then its component  $W_{p-1}$ satisfies \eqref{bas}. 

\smallskip
We will be particularily  interested in the case when $p=m$.
For degree reasons then no $\tilde I$ can come into play so 
\begin{multline}\label{wdef}
W_{m-1}=
\log(1/|f|)\int_ef\w\sigma\w(\al\tilde\Theta+\al Df\w\dbar\sigma)_{m-1}\\
-\sum_{\ell=1}^{m-1}\frac{(-1)^{\ell}}{2\ell}
\int_e f\wedge\sigma\wedge (\al\tilde\Theta-\al Df\w\dbar\sigma)_{m-1-\ell}
\wedge (-\al Df\w\dbar\sigma)_\ell.
\end{multline}
If $E$ is positive in the sense that $E^*$ is Nakano negative, then
$C_{m-1}(D_Q)$ is a positive current  and 
\begin{equation}\label{positiv}
W'_{m-1}=W_{m-1}+\alpha C_{m-1}(D_Q)
\end{equation}
is positive, for a large enough  positive constant $\alpha$,
see Section~7 in \cite{Apl}.


\begin{remark}
If $E$ is a line bundle, i.e., $m=1$, then $V=0$, and since  $\sigma\cdot f=1$
we have that $W=\log(1/|f|)$, 
$C_1(Q)=0$,  and hence \eqref{bas}   is just the classical
Poincar\'e-Lelong formula.
\end{remark}

Recall that a $(k,k)$-current  $w$ is of {\it logarithmic type} along
a subvariety $Z$ if the following holds: There exists a surjective mapping 
$\pi\colon \tilde X\to X$ such that $\tilde Z=\pi^{-1}Z$ has normal crossings
in $\tilde X$, 
$\pi$ has surjective differential  in $\tilde X\setminus \tilde Z$,
$w=\pi_*\tilde w$, where $\tilde w$ is smooth in 
$\tilde X\setminus \tilde Z$, and locally $\tilde w$  is of the form
\begin{equation}\label{logtyp}
\sum_j \gamma''_j  \log|s_j| + \gamma',
\end{equation}
where $s$ is a suitable local coordinate system,  
$\gamma'$ is a smooth form,  and
$\gamma''_j$ are  closed smooth forms.

\begin{lma}\label{lt}
The current $W$  is of logarithmic type
along $Z$.
\end{lma}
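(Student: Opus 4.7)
The plan is to prove Lemma~\ref{lt} by pulling $W$ back to a resolution of singularities. Hironaka's theorem furnishes a proper modification $\nu\colon \tilde X\to X$ that is an isomorphism over $X\setminus Z$ and such that $\tilde Z := \nu^{-1}(Z)$ is a simple normal crossings divisor; moreover, locally on $\tilde X$ there exist coordinates $s_1,\ldots,s_n$ in which $\tilde Z = \{s_1\cdots s_k = 0\}$ and $\nu^*f = f^0 f'$, where $f^0 = u\, s_1^{a_1}\cdots s_k^{a_k}$ with $u$ a non-vanishing holomorphic function and $f'$ a non-vanishing section of $\nu^*E$. I would take $\pi:=\nu$ and $\tilde w := \nu^*W$, which is smooth on $\tilde X\setminus \tilde Z$ because $\nu$ is a biholomorphism there. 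Once $\tilde w$ is shown to be locally integrable with the structure \eqref{logtyp}, the identity $W = \nu_*\tilde w$ follows because both currents have order zero and agree on the open set of full measure $X\setminus Z$.

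The body of the proof establishes that, locally on $\tilde X$,
\begin{equation*}
\tilde w = \sum_{j=1}^k \gamma''_j\,\log|s_j| + \gamma',
\end{equation*}
with $\gamma''_j$ closed smooth forms and $\gamma'$ smooth. Three ingredients go into this. First, $\nu^*\log(1/|f|) = -\sum_{j=1}^k a_j\log|s_j| + h$, with $h := -\log|u|-\log|f'|$ smooth. Second, $\nu^*C(D_Q)$ is smooth and closed on all of $\tilde X$, as shown by the computation preceding the lemma. Third, $\nu^*V$ is smooth on $\tilde X$: each summand in \eqref{vformel} is built from $f\wedge\sigma$, $Df\wedge\dbar\sigma$ and the smooth factor $\tilde\Theta$, and the calculation already performed in the text shows that each $\nu^*\bigl(f\wedge\sigma\wedge(Df\wedge\dbar\sigma)^{\ell}\bigr)$ extends smoothly across $\tilde Z$, the singular behaviour $\sigma\sim 1/f^0$ being absorbed by $f\sim f^0$. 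Combining these,
\begin{equation*}
\tilde w = \Bigl(-\sum_{j=1}^k a_j\log|s_j| + h\Bigr)\nu^*C(D_Q) - \nu^*V,
\end{equation*}
which has the desired form with $\gamma''_j := -a_j\,\nu^*C(D_Q)$ and $\gamma' := h\,\nu^*C(D_Q) - \nu^*V$.

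The only step that requires actual work is the smoothness of $\nu^*V$. Writing $\nu^*\sigma = \sigma'/f^0$, the quantity $\nu^*(Df\wedge\dbar\sigma)$ splits after distributing $D$ and $\dbar$ across the fraction into four pieces, three of which carry the poles $df^0/f^0$, $\dbar f^0/f^0$ or $df^0\wedge\dbar f^0/(f^0)^2$. The point is that in every summand of \eqref{vformel} these pole terms appear wedged with $\nu^*(f\wedge\sigma)=f'\wedge\sigma'$, which is smooth and carries a compensating factor of $f^0$, and after expansion inside the exterior algebra of $E\oplus E^*$ the negative powers of $f^0$ cancel. This is essentially the calculation already indicated in the text just before the lemma, so the main task is merely to organise the cancellation cleanly; the remaining items (the pushforward identity, the pullback of $\log(1/|f|)$, and the smoothness of $\nu^*C(D_Q)$) are immediate from the chosen resolution.
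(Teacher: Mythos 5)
Your proof follows the same route as the paper's: pull $W$ back under the resolution $\nu$, use that $\nu^*C(D_Q)$ is smooth and closed, that $\nu^*V$ extends smoothly across $\tilde Z$, and that $\nu^*\log(1/|f|)$ splits off $-\sum a_j\log|s_j|$ plus a smooth remainder, so that $\nu^*W$ acquires the form \eqref{logtyp}. The only inaccuracy is cosmetic: since $f^0$ is holomorphic, $\bar\partial f^0=0$, so $\nu^*\bar\partial\sigma=\bar\partial\sigma'/f^0$ contributes a single pole term $df^0/f^0$ rather than the three you list; but that term is killed by $f'\wedge f'=0$ in the exterior algebra once wedged with $f'\wedge\sigma'$, exactly as you say, so the conclusion stands.
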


\begin{proof} Let $\nu\colon \tilde X\to X$ be the resolution above.
Then $\nu^* C(D_Q)$ is smooth and closed,  
$\nu^* V$ is smooth, and $\nu^*\log|f|=\log|f^0|+\log|f'|$,
where $f^0$ locally is a polynomial whose zero set is $\nu^{-1}Z$. 
In view of 
\eqref{wdef} thus $W$ is of logarithmic type along $Z$.
\end{proof}

\section{Explicit Koppelman formulas  on  $\P^n$}\label{pn}

The line bundle  $\Ok(k)$ over $\P^n$, whose
sections are naturally identified by $k$-homogeneous functions $\xi$
on $\C^{n+1}\setminus\{0\}$, 
will be denoted, for typographical reasons, by $L^k$.
We have the natural Hermitian norm
$$
\|\xi([z])\|^2=\frac{|\xi(z)|^2}{|z|^{2k}}.
$$

Recall that a differential form $\alpha$ on $\C^{n+1}$
is projective, i.e., the pullback under $\C^{n+1}\setminus\{0\}\to\P^n$ of a form
on $\P^n$, 
if and only if $\delta_z\alpha=\delta_{\bar z}\alpha
=0$, where
$\delta_z$ is interior multiplication with $\sum_0^n z_j(\partial/\partial z_j)$ 
and $\delta_{\bar z}$ is its conjugate.


\begin{lma}\label{linje}
If $D_{L^r}'$ is the $(1,0)$-part of the Chern connection on 
$L^r$ and $g$ is a section,
expressed as an $r$-homogeneous function  in $z$,  then
\begin{equation}\label{blad}
D_{L^r}' g= |z|^{2r}\partial\frac{g}{|z|^{2r}}=\partial g-r g\partial\log|z|^2,
\end{equation}
and 
\begin{equation}\label{blad2}
\al\Theta_{L^r}=-r\al\dbar\partial\log|z|^2=r\omega.
\end{equation}
\end{lma}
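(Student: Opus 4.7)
The plan is to verify both formulas by a local calculation in the standard coordinate chart and then recognize the resulting expressions as the claimed global formulas. Fix the chart $U_0=\{z_0\neq 0\}$ of $\P^n$ and use the holomorphic frame $s=z_0^r$ of $L^r$ on $U_0$. In this frame the Hermitian weight is
\begin{equation*}
h=\|s\|^2=\frac{|z_0|^{2r}}{|z|^{2r}},\qquad \log h=r\log|z_0|^2-r\log|z|^2.
\end{equation*}
Recall that the Chern connection on a holomorphic Hermitian line bundle satisfies $D'=\partial+\partial\log h$ on local coefficients, and has curvature $\Theta=\dbar\partial\log h$.

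For \eqref{blad}, I would write an $r$-homogeneous function $g$ in the chosen frame as $g=(g/z_0^r)\cdot z_0^r$, so that
\begin{equation*}
D'_{L^r}g= z_0^r\,\partial(g/z_0^r)+g\,\partial\log h.
\end{equation*}
A direct calculation gives $z_0^r\partial(g/z_0^r)=\partial g-rg\,\partial z_0/z_0=\partial g-rg\,\partial\log|z_0|^2$ (using that $\partial\log|z_0|^2=\partial\log z_0$ since $\bar z_0$ is antiholomorphic). Substituting the explicit form of $\partial\log h$, the $|z_0|^2$-terms cancel and one is left with $\partial g-rg\,\partial\log|z|^2$. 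This is a well-defined expression on $\C^{n+1}\setminus\{0\}$, independent of the chart, so it gives the global formula. The identity $|z|^{2r}\partial(g/|z|^{2r})=\partial g-rg\,\partial\log|z|^2$ is then an elementary calculation using the product rule and $\partial(|z|^{-2r})=-r|z|^{-2r}\partial\log|z|^2$.

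For \eqref{blad2}, since $\log|z_0|^2$ is pluriharmonic on $U_0$, we have $\dbar\partial\log|z_0|^2=0$ there, so
\begin{equation*}
\Theta_{L^r}=\dbar\partial\log h=-r\,\dbar\partial\log|z|^2.
\end{equation*}
To identify the right-hand side as $r\omega$, I would use $\ddc=2\al\partial\dbar=-2\al\dbar\partial$ together with $\log|z|=\tfrac12\log|z|^2$ to get
\begin{equation*}
\omega=\ddc\log|z|=-\al\dbar\partial\log|z|^2,
\end{equation*}
whence $\al\Theta_{L^r}=-r\al\dbar\partial\log|z|^2=r\omega$, as claimed. There is no real obstacle here; the only thing to be careful about is bookkeeping the sign conventions ($\ddc=2\al\partial\dbar$, $d^c=\al(\dbar-\partial)$, and the formula for the Chern curvature) so that the final expression matches the normalization $\int_{\P^n}\omega^n=1$.
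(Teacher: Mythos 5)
Your proof is correct, but it takes a genuinely different route from the paper's. You pass to the affine chart $U_0$, choose the holomorphic frame $s=z_0^r$, compute the Hermitian weight $h=|z_0|^{2r}/|z|^{2r}$, and then invoke the standard local formulas $D'=\partial+\partial\log h$ and $\Theta=\dbar\partial\log h$; the pluriharmonic term $\log|z_0|^2$ drops out and you recover the global expression. The paper instead avoids choosing a local frame entirely: it works directly with the defining property $\partial\|g\|^2=\la D'_{L^r}g,g\ra$ of the Chern connection for a (locally) holomorphic section $g$, expands $\partial(|g|^2/|z|^{2r})=\bar g\,\partial(g/|z|^{2r})$, and reads off $D'_{L^r}g=|z|^{2r}\partial(g/|z|^{2r})$ by comparison; the curvature then follows from $\Theta_{L^r}g=\dbar D'_{L^r}g$. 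Your approach has the virtue of being the most familiar textbook computation and makes the chart-independence an explicit afterthought, while the paper's approach is frame-free from the start and a bit shorter; both rest on the same underlying facts about Hermitian line bundles and both yield the normalization $\al\Theta_{L^r}=r\omega$.
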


Notice that since $g$ is $r$-homogeneous, i.e., $g(\lambda z)=\lambda^r g(z)$, we have that
$$
r\lambda^{r-1} g(z)=\sum_0^n z_j\frac{\partial g}{\partial z_j}(\lambda z),
$$ 
and thus $\delta_z(\partial g)=r g(z)$.
Since furthermore $\delta_z\partial\log|z|^2=1$, 
the right hand side of \eqref{blad} is indeed a projective form.

\begin{proof}
We may assume that $g$ is (locally) holomorphic.
Then
$$
\partial \|g\|^2=\langle D'_{L^r}g,g\rangle=\frac{(D'_{L^r}g)\bar g}{|z|^{2r}},
$$
but also
$$
\partial\|g\|^2=\partial\frac{|g|^2}{|z|^{2r}}=
|z|^{2r}\partial\frac{g}{|z|^{2r}}\cdot \bar g/|z|^{2r}.
$$
Combining,  we get \eqref{blad}.
Now,
$
\Theta_{L^r}g=\dbar D'_{L^r} g= -r(\dbar\partial\log|z|^2) g,
$
which gives \eqref{blad2}.
\end{proof}

We are now going to compute the currents obtained from 
Proposition~\eqref{wmminusett}  with $H$ and $\eta$ as in the the introduction.
In order not to mix up  with the usual tangent bundle,
we introduce an abstract copy of  $H$, cf.,  \cite{BB}.
Let $L^r_\zeta$ be the pullback of $L^r\to\P^n_\zeta$ to $\P^n_\zeta\times\P^n_z$.
Furthermore, let $\C^{n+1}\to \P^n_\zeta\times\P^n_z$  be the trivial bundle taken with 
the natural metric, and
consider   the quotient bundle $\C^{n+1}/\zeta\C$, 
where $\zeta\C$ is $L_\zeta^{-1}$, i.e., the pullback of the tautological \
line bundle over $\P^n_\zeta$.
We define 
$$
H=(\C^{n+1}/\zeta\C)\otimes L_z
$$
over  $\P^n_\zeta\times\P^n_z$ equipped  with the induced metric.
Since $\C^{n+1}$ has trivial metric it has vanishing curvature, and therefore the
quotient $\C^{n+1}/\zeta\C$ is positive   in the sense that its dual is 
Nakano negative, cf., Proposition~7.1 in \cite{Apl}. It follows that 
$H$ are positive  in the same sense, since  $L$ is positive.

A section $w$ of $H$ is represented by a mapping $\C^{n+1}_\zeta\times\C^{n+1}_z\to\C^{n+1}$
that is $0$-homogeneous in $\zeta$ and $1$-homogeneous in $z$.
In particular we have the
global holomorphic section $\eta(\zeta,z)=z$, which vanishes to first order on the diagonal
$\Delta$. The dual of $\C^{n+1}/\zeta\C$ is the subbundle of $(\C^{n+1})^*$
that is orthogonal to $\bar\zeta\C$. Therefore,
a  section of the dual bundle $H^*$ can be represented by a mapping
$w\colon \C^{n+1}_z\times\C^{n+1}_\zeta\to(\C^{n+1})^*$ that is 
$0$-homogeneous in $\zeta$, $-1$-homogeneous
in $z$, and such that $w\cdot\zeta=0$. 

\smallskip
Let $e_0,\ldots,e_n$ be  the trivial global frame (basis)  for $\C^{n+1}$ above, and let
$e_j^*$ be its dual basis for $(\C^{n+1})^*$. 
If $\xi=\xi\cdot e=\sum\xi_je_j$ is a section of $\C^{n+1}/\zeta\C$, then 
its norm is equal to the norm of the orthogonal projection onto the orthogonal complement of $L^{-1}_\zeta=\zeta\C$,
$$
\frac{|\zeta|^2\xi\cdot e-(\xi\cdot\bar\zeta)\zeta\cdot e}{|\zeta|^2}.
$$
Thus if $\xi\sim\xi\cdot e $ is a section of $H$, i.e., the functions $\xi_j$ are in 
addition $1$-homogeneous in $z$, we have
$$
\|\xi\|^2=\frac{|\zeta|^2|\xi|^2-|\xi\cdot\bar\zeta|^2}{|z|^2|\zeta|^2};
$$
here and in the sequel we use $\|\xi\|$ to distinguish the norm of the section from
$|\xi|$, denoting the norm of the corresponding vector-valued function on $\C^{n+1}$.
In particular, \eqref{etanorm} holds.
Moreover, 
$$
s=\frac{|\zeta|^2\bar z\cdot e^*-(\zeta\cdot\bar z)\bar\zeta\cdot e^*}{|z|^2|\zeta|^2}.
$$
is the section of $H^*$ with minimal (since $\|s\|=\|\eta\|$)  norm such that 
$s\cdot\eta=\|\eta\|^2$.

\smallskip
Notice that a form $\alpha$ on $\C^{n+1}_\zeta\times\C^{n+1}_z$
is projective if and only if $\delta_z\alpha=\delta_{\bar z}\alpha
=\delta_\zeta\alpha=\delta_{\bar\zeta}\alpha=0$.
We can write a form-valued section $\xi$ of $H$ as $\xi=\xi\cdot e=\sum\xi_j\otimes e_j$
where $\xi_j$ are projective forms.

We need expressions for $D_H$ and $\Theta_H$. Let $\omega_z$ and
$\omega_\zeta$ be the
K\"ahler forms on $\P^n_z$ and $\P^n_\zeta$, respectively.

\begin{prop}\label{gruff} 
If $\xi\cdot e$ is a section of $H$, then
\begin{equation}\label{blad4}
D'_H(\xi\cdot e)=\partial\xi\cdot e-\frac{\xi\cdot\bar\zeta}{|\zeta|^2}d\zeta\cdot e
-\frac{\partial|z|^2}{|z|^2}\otimes \xi\cdot e.
\end{equation}
Moreover, 
\begin{equation}\label{blad5}
\al\Theta_H=
\al  d\zeta\cdot e\w \dbar\frac{\bar\zeta\cdot e^*}{|\zeta|^2}
+\omega_z\otimes e\cdot e^*.
\end{equation}
\end{prop}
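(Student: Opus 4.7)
The plan is to exploit the tensor product decomposition $H=Q\otimes L_z$, where $Q=\C^{n+1}/\zeta\C$ carries the quotient metric induced from the trivial bundle $\C^{n+1}$. Since the Chern connection and its curvature both behave additively under tensor products, and since $L_z$ is a line bundle, Lemma~\ref{linje} (with $r=1$) handles the $L_z$-factor directly: $D'_{L_z}f=\partial f-f\,\partial|z|^2/|z|^2$ and $\al\Theta_{L_z}=\omega_z$. What remains is to compute $D'_Q$ and $\Theta_Q$ for the quotient $\C^{n+1}\to Q$ and then assemble.

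For $D'_Q$ I use the standard description of the $(1,0)$-part of the Chern connection on a holomorphic quotient $E/S$ with induced metric: $D'_Q q=\pi(D'_E\tilde q)$, where $\tilde q$ is the \emph{orthogonal} lift of $q$ to $E$. Here $E=\C^{n+1}$ is flat, so $D'_E=\partial$, and the orthogonal lift of a representative $\xi\cdot e$ is $\tilde\xi\cdot e$ with $\tilde\xi=\xi-(\xi\cdot\bar\zeta/|\zeta|^2)\zeta$. Direct differentiation gives
$$\partial\tilde\xi\cdot e=\partial\xi\cdot e-\partial\Big(\frac{\xi\cdot\bar\zeta}{|\zeta|^2}\Big)\zeta\cdot e-\frac{\xi\cdot\bar\zeta}{|\zeta|^2}d\zeta\cdot e,$$
and the middle term is a multiple of $\zeta$, hence vanishes modulo $\zeta\C$. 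Assembling this with the $L_z$-piece via the Leibniz rule for tensor product connections (write $\xi\cdot e=\sum_j\pi(e_j)\otimes\xi_j$, apply $D_H=D_Q\otimes I+I\otimes D_{L_z}$, and collect terms) then yields \eqref{blad4}.

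For the curvature I combine $\Theta_H=\dbar D'_H+D'_H\dbar$ with the tensor product identity $\Theta_H=\Theta_Q\otimes I_{L_z}+I_Q\otimes\Theta_{L_z}$; since $\al\Theta_{L_z}=\omega_z$, the $\omega_z\otimes e\cdot e^*$ summand in \eqref{blad5} comes for free, and the claim reduces to showing $\al\Theta_Q=\al(d\zeta\cdot e)\w\dbar(\bar\zeta\cdot e^*/|\zeta|^2)$. Testing on the holomorphic section $\pi(e_j)$, for which $\dbar\pi(e_j)=0$, we find $\Theta_Q\pi(e_j)=\dbar D'_Q\pi(e_j)=-\dbar(\bar\zeta_j/|\zeta|^2)\w d\zeta\cdot e$ modulo $\zeta\C$, and this is precisely the value at $e_j$ of the claimed endomorphism; hence \eqref{blad5}.

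The main obstacle is bookkeeping rather than anything conceptual: one must verify that every correction produced by differentiating the orthogonal projector really does land in $\zeta\C$ and so drops out in $Q$, and one must be careful about signs when swapping a scalar $(0,1)$-form past the $\C^{n+1}$-valued $(1,0)$-form $d\zeta\cdot e$. Modulo that, everything reduces mechanically to Lemma~\ref{linje} and the Leibniz rule.
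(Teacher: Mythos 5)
Your proposal is correct and follows essentially the same route as the paper: decompose $H=(\C^{n+1}/\zeta\C)\otimes L_z$, compute $D'$ on the quotient factor via the orthogonal lift in the flat bundle $\C^{n+1}$ (the correction term landing in $\zeta\C$ and hence dropping out), handle the $L_z$ factor with Lemma~\ref{linje}, and obtain the curvature from $\Theta=\dbar D'$ on holomorphic sections. The paper states the quotient-connection step more tersely, but the underlying computation is the one you spell out.
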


Here $\partial w\cdot e^*=\sum_0^n \partial w_j\otimes e_j^*$ etc. 
Notice that indeed $d\zeta\cdot e$ is a projective 
$H$-valued form, since it can be written
$$
\Big(d\zeta -\frac{\bar\zeta\cdot d\zeta}{|\zeta|^2}\zeta\Big)
\cdot e
$$
and each $d\zeta_j-(\bar\zeta\cdot d\zeta/|\zeta|^2)\zeta_j$
is projective.

\begin{proof}
First assume that $\xi$ is just a section of  the bundle $F=\C^{n+1}/\zeta\C$ 
over $\P^n_\zeta$. Since $d$ is the Chern connection on $\C^{n+1}$, the Chern connection
on $\xi$ is equal to $d$ acting on the representative of $\xi$ in $F$ that is
orthogonal to $\zeta\C$. Since                                              
$$
\pi(\xi\cdot e)=\frac{\xi\cdot\bar\zeta}{|\zeta|^2}\zeta\cdot e,
$$
is the orthogonal projection $\C^{n+1}\to\zeta\C$, 
we get the formula
$$
D'_F(\xi\cdot e)=\partial\xi\cdot e-\frac{\xi\cdot\bar\zeta}{|\zeta|^2}d\zeta\cdot e.
$$
Since $H=F\otimes L_z$ now
\eqref{blad4} follows from Lemma~\ref{linje}.
Using that $\Theta_E\xi=\dbar D'\xi$ for holomorphic
$\xi$, now \eqref{blad5} follows as well.
\end{proof}

\begin{lma}\label{ee}
 Let $\alpha$ be a form with values in $\Lambda(H\otimes H^*)$, and let
$e'$ denote a local frame for $H$. If $e$ is the standard basis for $\C^{n+1}$ as above, then 
\begin{equation}\label{skol}
 \int_{e'} \alpha =\int_e \frac{\zeta\cdot e\w\bar\zeta\cdot e^*}{|\zeta|^2}\w\alpha,
\end{equation}
where $\alpha$ on the right hand side is any  form with values in 
$\Lambda(\C^{n+1}\oplus(\C^{n+1})^*)$ that
represents $\alpha$. 
\end{lma}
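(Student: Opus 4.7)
The plan is to exploit the orthogonal decomposition $\C^{n+1} = H \oplus \zeta\C$ (with the dual splitting of $(\C^{n+1})^*$) to factor the top identity element of the ambient Grassmann algebra, after which the fermionic integrals on both sides can be read off directly. The key bookkeeping step is the pointwise identity
\begin{equation*}
\tilde I_{\C^{n+1}} \;=\; \tilde I_H + \frac{\zeta\cdot e \w \bar\zeta\cdot e^*}{|\zeta|^2},
\end{equation*}
which expresses the identity endomorphism of $\C^{n+1}$ as the sum of the orthogonal projections onto $H$ and onto $\zeta\C$ (translated to the exterior algebra); it is most easily verified in an orthonormal frame whose last element is $\zeta/|\zeta|$.

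I would then raise this identity to the $(n+1)$-st wedge power. Both summands are Grassmann-even, so they supercommute and the binomial theorem applies. Since $(\zeta\cdot e)^2 = (\bar\zeta\cdot e^*)^2 = 0$ forces the square of the second summand to vanish, and since $\tilde I_H^{n+1} = 0$ because $H$ has rank $n$, the expansion collapses to a single cross-term, giving after division by $(n+1)!$ the factorization
\begin{equation*}
\tilde I_{\C^{n+1},\,n+1} \;=\; \tilde I_{H,n} \w \frac{\zeta\cdot e \w \bar\zeta\cdot e^*}{|\zeta|^2}.
\end{equation*}

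With this in hand the main computation is pure bookkeeping. Write $\alpha = \alpha'\w\tilde I_{H,n} + \alpha''$, so that $\int_{e'}\alpha = \alpha'$ by definition, and wedge on the left by $\frac{\zeta\cdot e \w \bar\zeta\cdot e^*}{|\zeta|^2}$: the first term becomes $\alpha'\w\tilde I_{\C^{n+1},\,n+1}$ (the even factor supercommuting past the form $\alpha'$), while the second term has $(e,e^*)$-bidegree strictly below $(n+1,n+1)$ in the ambient algebra, because the only top-bidegree element of $\Lambda H\otimes\Lambda H^*$ is $\tilde I_{H,n}$ itself; consequently it contributes nothing to $\tilde I_{\C^{n+1},\,n+1}$. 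Reading off the coefficient recovers $\alpha'$. Independence of the chosen lift of $\alpha$ into the larger algebra then follows because two lifts differ by an element of the ideal generated by $\zeta\cdot e$ and $\bar\zeta\cdot e^*$, which is annihilated by wedging with $\zeta\cdot e\w\bar\zeta\cdot e^*$.

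The main obstacle I anticipate is sign management in the superalgebra --- verifying that the binomial collapse produces exactly the coefficient $\binom{n+1}{n}=n+1$ needed to pass from $\tilde I^{n+1}$ to $\tilde I_{n+1}$ without a stray sign, and that the even-degree factor $\zeta\cdot e \w \bar\zeta\cdot e^*$ genuinely commutes with $\alpha'$ and combines with $\tilde I_{H,n}$ in the required order.
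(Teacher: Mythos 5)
Your proof is correct, and it takes a genuinely different route from the paper's. The paper observes that both sides of \eqref{skol} are invariant under isometric automorphisms of $\P^n$ and uses this to reduce to the special point $\zeta=(1,0,\ldots,0)$, where with the choice $e_j'=e_j$ ($j\ge 1$) the factor $\zeta\cdot e\w\bar\zeta\cdot e^*/|\zeta|^2$ reduces to $e_0\w e_0^*$ and the identity becomes immediate. Your argument instead works at an arbitrary point: you decompose the identity endomorphism as $\tilde I_{\C^{n+1}}=\tilde I_H+\tilde\pi$ with $\tilde\pi=\zeta\cdot e\w\bar\zeta\cdot e^*/|\zeta|^2$ the (exterior-algebra avatar of the) orthogonal projection onto $\zeta\C$, use that $\tilde\pi^2=0$ (as $(\zeta\cdot e)^2=0$) and $\tilde I_H^{n+1}=0$ (rank $n$) to collapse $(\tilde I_H+\tilde\pi)^{n+1}$ to the single cross-term $(n+1)\tilde I_{H}^n\w\tilde\pi$, hence the factorization $\tilde I_{\C^{n+1},n+1}=\tilde I_{H,n}\w\tilde\pi$, and then read off the fermionic integral by bidegree considerations. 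You also treat the well-definedness in the representative explicitly, by noting that the ideal generated by $\zeta\cdot e$ and $\bar\zeta\cdot e^*$ is annihilated by wedging with $\tilde\pi$; the paper leaves this implicit. The trade-off: the paper's argument is shorter because the symmetry reduction does all the work, while yours is frame-independent throughout, makes the role of $\tilde\pi$ as a completion of the identity conceptually explicit, and does not invoke invariance.
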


\begin{proof}
Let $[\zeta]$ be an arbitrary point on $\P^n_\zeta$. After applying an  
isometric automorphism of $\P^n$,  we may assume that 
$\zeta= (1,0,0,\ldots)$. If we choose the basis  $e_j'=e_j$, $j=1,2,\ldots$, then 
\eqref{skol} is immediate.
\end{proof}

\begin{proof}[Proof of Theorem~\ref{stendumt}]
We already know that \eqref{basform} holds, so let us now compute $P$.
If $d\zeta\cdot e$ from now on  denotes $\sum_0^n d\zeta_j\w e_j$ 
etc,   we have that 
$$
\al\tilde\Theta_H=
-\al d\zeta\cdot e\w \dbar\frac{\bar\zeta}{|\zeta|^2}\cdot e^*
+\omega_z\w  \tilde I,
$$
where $\tilde I=\sum_0^n e_j\w e_j^*$; the change of sign,  compared to \eqref{blad5},
is because  $e_j\w d\zeta_k=-d\zeta_k\w e_j$. In view of Lemma~\ref{ee} we have
$$
P=c_n(D_H)=\int_{e'}(\al\tilde\Theta_H)_n=
\int_e\frac{\zeta\cdot e\w\bar\zeta\cdot e^*}{|\zeta|^2}\w 
\Big(\frac{-\al d\zeta\cdot e\w d\bar\zeta\cdot e^*}{|\zeta|^2}
+\omega_z\w \tilde I \Big)_n.
$$
Since this formula as well as  \eqref{pek}  are  invariant  we can assume that
$\zeta=(1,0,\ldots,0)$. Then \eqref{pek}  follows by a simple computation. However,
for further reference we prefer a more direct computational argument.
It is easy to check that 
\begin{equation}\label{dd}
\frac{1}{\al(k+1)}\delta_{\bar\zeta}\delta_\zeta(-\al d\zeta\cdot e\w d\bar\zeta\cdot e^*)_{k+1}=
\zeta\cdot e\w\bar\zeta\cdot e^*\w(-\al d\zeta\cdot e\w d\bar\zeta\cdot e^*)_k
\end{equation}
and that
\begin{equation}\label{ddd}
\frac{1}{\al(k+1)|\zeta|^{2k+2}}\delta_{\bar\zeta}\delta_\zeta\beta^{k+1}=\omega_\zeta^k,
\end{equation}
if   $\beta=\al\partial\dbar|\zeta|^2$.
Hence
\begin{multline*}
P=\sum_{k=0}^n\int_e
\frac{\zeta\cdot e\w\bar\zeta\cdot e^*}{|\zeta|^2}\w 
\big(\frac{-\al d\zeta\cdot e\w d\bar\zeta\cdot e^*}{|\zeta|^2}\big)_k\w\tilde I_{n-k}\w\omega_z^{n-k}=\\
\sum_{k=0}^n\frac{1}{\al(k+1)|\zeta|^{2k+2}}
\delta_{\bar\zeta}\delta_\zeta\int_e
(-\al d\zeta\cdot e\w d\bar\zeta\cdot e^*)_{k+1}\w\tilde I_{n-k}\w\omega_z^{n-k}=\\
\sum_{k=0}^n\frac{1}{\al(k+1)|\zeta|^{2k+2}}
\delta_{\bar\zeta}\delta_\zeta \beta^{k+1}\w\omega_z^{n-k}=
\sum_{k=0}^n\omega_\zeta^k\w\omega_z^{n-k}.
\end{multline*}

\smallskip
We now turn our attention to the kernel $K$. 
According to \eqref{wdef} and Lemma~\ref{ee},
\begin{multline*}
K=W_{n-1}=\\
\log(1/\|\eta\|)\int_{e}\frac{\zeta\cdot e\w\bar\zeta\cdot e^*}{|\zeta|^2}
\w\eta\w\sigma\w(\al\tilde\Theta_H-\al D\eta\w\dbar\sigma)_{n-1}-\\
\sum_{\ell=1}^{n-1}
\frac{(-1)^{\ell}}{2\ell}\int_{e}\frac{\zeta\cdot e\w\bar\zeta\cdot e^*}{|\zeta|^2}\w
\eta\w\sigma\w(\al\tilde\Theta_H-\al D\eta\w\dbar\sigma)_{n-1-\ell}
\w(-\al Df\w\dbar\sigma)_\ell.
\end{multline*}
Now
$$
\sigma=\frac{s}{\|\eta\|^2}=\frac{|\zeta|^2\bar z\cdot e^*-(\zeta\cdot\bar z)\bar\zeta\cdot e^*}
{|\zeta\w z|^2},
$$
and since $\eta=z\cdot e$ we therefore have that
$$
\frac{\zeta\cdot e\w\bar\zeta\cdot e^*}{|\zeta|^2}
\w\eta\w\sigma=
\frac{\zeta\cdot e\w\bar\zeta\cdot e^*\w z\cdot e\w\bar z\cdot e^*}{|\zeta\w z|^2}=
\frac{\Ok(|\zeta\w z|^2)}{|\zeta\w z|^2}=
\frac{\Ok(\|\eta\|^2)}{\|\eta\|^2}.
$$
If $\sim$ denotes equality after multiplication with this factor, we have
$$
\al\tilde\Theta_H\sim
\frac{\al d\zeta\cdot e\w d\bar\zeta\cdot e^*}{|\zeta|^2} +\omega_z\w\tilde I,
$$
$$
D\eta=D'(z\cdot e)\sim dz\cdot e-\frac{z\cdot\bar\zeta}{|\zeta|^2}d\zeta\cdot e,
$$
and
$$
\dbar\sigma\sim\frac{|\zeta|^2d\bar z\cdot e^*-(\zeta\cdot\bar z)d \bar \zeta\cdot e^*}
{|\zeta\w z|^2}.
$$
If
$$
\tau=\frac{\big(|\zeta|^2 dz\cdot e-(z\cdot\bar\zeta)d\zeta\cdot e\big)\w
\big(|\zeta|^2 d\bar z\cdot e^*-(\bar z\cdot\zeta)d\bar \zeta\cdot e^*\big)}
{|\zeta|^4|z|^2},
$$
thus
$$
D\eta\w\dbar\sigma\sim \frac{\tau}{\|\eta\|^2}.
$$
From the expression for $K$ above, with the modification  \eqref{positiv}  to obtain
a positive kernel, 
we get  the representation \eqref{korm}, where
\begin{equation}\label{pebby}
\gamma_j^i=c_j^i\int_e \frac{\zeta\cdot e
\w\bar\zeta\cdot e^*\w z\cdot e\w\bar z\cdot e^*}{|\zeta|^2|z|^2}
\w \tau^j\w (\al\tilde \Theta_H)^{n-1-j}
\end{equation}
for some constants  $c_j^i$.
Thus $\gamma_j^i$ are smooth and $\Ok(\|\eta\|^2)$.
To see the invariance let  $\phi$ be a unitary mapping (matrix) on $\C^{n+1}$. First notice that
if $\zeta$ and $z$ are replaced by $\phi\zeta$ and $\phi z$, then 
$|\zeta|^2$, $\omega_z$,  $\zeta\cdot\bar z$ etc are unchanged. Moreover,
 $\zeta\cdot e$, $d\zeta \cdot e$, $\bar\zeta\cdot e^*$ etc,  will
become same expressions, but with $e$ and $e^*$ replaced by $\phi^t e$ and
$\bar\phi^t e^*$, respectively. However, since $\phi$ is unitary,
$\bar\phi^t e^*$ is the dual basis of $\phi^t e$. Since 
\eqref{pebby} is independent of the choice
of frame $e$, it follows that $\gamma_j^i$ is invariant. Since
$\eta$ is invariant, it follows that $K$ is invariant.

\smallskip

It remains to prove part (iii).
By duality we have to show that the (dual) operators map  smooth forms to smooth
forms. Let $p$ be a fixed  point in $\P^n$. 
If $n=1$, then for each point $z$, except for the antipodal point, there is a unique
isometric isomorphism $\phi_z$ such that $z\mapsto p$ and $p\mapsto z$.
If $n>1$, then  for each point $z$ outside the  hyperplane of antipodal points
there is such a mapping,  and it is unique if we require that it is the identity
on the orthogonal complement of the complex line through $p$ and $z$.
It is clear that $\phi_z$ so defined depends smoothly on $z$.
Therefore, for $z$ outside the exceptional hyperplane,
$$
\int_\zeta
K(\zeta,z)\w \psi(\zeta)=\int_\zeta K(\phi_z^{-1}(\zeta),\phi_z^{-1}(z))\w\psi(\zeta)=
\int_\xi K(\xi,p)\w\phi_z^*\psi(\xi),
$$
and it follows that  the integral depends smoothly on $z$
if $\phi$ is smooth.
\end{proof}

\section{A further computation}

Let us calculate $K$ a little further. Since we are mainly interested in the action on
$(*,*)$-forms, we only care about the components that have bidegree $(*,*)$ in $z$.
To simplify even more  let us restrict ourselves to the component $K_0$ which has bidegree
$(0,0)$ in $z$. 
Letting
$$
m=-\frac{\al d\zeta\cdot e\w d\bar\zeta\cdot e^*}{|\zeta|^2}
$$
we then have 
$
\al\tilde\Theta_H\sim m
$
and
$$
-\al D\eta\w\dbar\sigma\sim \frac{|\zeta\cdot\bar z|^2}{|\zeta\w z|^2} m.
$$
Noting that
$$
1+\frac{|\zeta\cdot \bar z|^2}{|\zeta\w z|^2}=1+\frac{|\zeta\cdot \bar z|^2}{|\zeta|^2|z|^2-|\zeta\cdot\bar z|^2}=
\frac{|\zeta|^2|z|^2}{|\zeta\w z|^2},
$$
we also have
$$
\al\tilde\Theta_H-\al D\eta\w\dbar\sigma\sim \frac{|\zeta|^2|z|^2}{|\zeta\w z|^2} m.
$$

\begin{lma}
If $\A$ denotes the component of $c_{n-1}(D_Q)$ which is $(0,0)$ in $z$, then
\begin{multline*}
\A=
\int_e 
\frac{\zeta\cdot e\w\bar\zeta\cdot e^*\w z\cdot e\w\bar z\cdot e^*}
{|\zeta\w z|^2} \w
\Big(\frac{|\zeta|^2|z|^2}{|\zeta\w z|^2} m\Big)_{n-1} 
=\\
c_n\Big(\frac{|z|^2}{|\zeta\w z|^2} \Big)^{n-1}\al ^{n-1}(n-1)! a\wedge\bar a,
\end{multline*} 
where
$$
a=\sum_{j<k}(\zeta_j z_k-\zeta_k z_j)\frac{\partial}{\partial\zeta_k}\neg \frac{\partial}{\partial\zeta_j}\neg
d\zeta_0\w\ldots\w d\zeta_n
$$ 
($\neg$ denotes interior multiplication) and $c_n=\pm 1$.
\end{lma}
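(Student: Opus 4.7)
The first equality is essentially a bookkeeping exercise assembling ingredients already prepared in the body of the section. Starting from \eqref{cq} with $E=H$ of rank $n$ and $f=\eta$, the Chern form $c_{n-1}(D_Q)$ is represented as an $\int_{e'}$ over a local frame for $H$. Applying Lemma~\ref{ee} pushes the integral onto the trivial $\C^{n+1}$-basis and introduces the prefactor $\frac{\zeta\cdot e\w\bar\zeta\cdot e^*}{|\zeta|^2}$; wedging this with $\eta\w\sigma$ gives, by the identity derived in the proof of Theorem~\ref{stendumt}, exactly $\frac{\zeta\cdot e\w\bar\zeta\cdot e^*\w z\cdot e\w\bar z\cdot e^*}{|\zeta\w z|^2}$. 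Projecting to bidegree $(0,0)$ in $z$ kills the $\omega_z\w\tilde I$ contribution to $\al\tilde\Theta_H$ as well as every summand of $D\eta\w\dbar\sigma$ carrying a $dz$ or $d\bar z$; what remains is precisely the simplification $\al\tilde\Theta_H-\al D\eta\w\dbar\sigma\sim\tfrac{|\zeta|^2|z|^2}{|\zeta\w z|^2}\,m$ recorded just before the lemma. Since $\tilde I$ has form-degree $0$, only the pure $m^{n-1}$ contribution to $(\tilde I+\al\tilde\Theta_H-\al D\eta\w\dbar\sigma)_{n-1}$ survives the form-degree $(n-1,n-1)$ projection, yielding the first equality.

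For the second equality I would pull out the scalar factor $(|\zeta|^2|z|^2/|\zeta\w z|^2)^{n-1}$ and, after writing $m^{n-1}=(-\al)^{n-1}|\zeta|^{-2(n-1)}(d\zeta\cdot e\w d\bar\zeta\cdot e^*)^{n-1}$, reduce matters to evaluating the purely combinatorial integral
\[
J \;=\; \int_e\bigl(\zeta\cdot e\w\bar\zeta\cdot e^*\w z\cdot e\w\bar z\cdot e^*\bigr)\w(d\zeta\cdot e\w d\bar\zeta\cdot e^*)^{n-1}.
\]
The key observation is the Pl\"ucker-type rewriting $\zeta\cdot e\w z\cdot e=\sum_{j<k}(\zeta_jz_k-\zeta_kz_j)\,e_j\w e_k$ (and its conjugate on the $e^*$-side). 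Expanding $(d\zeta\cdot e\w d\bar\zeta\cdot e^*)^{n-1}$ in the Grassmann algebra $\Lambda$ gives a sum over index tuples; the requirement that $\int_e$ single out the top form $\tilde I_{n+1}$ forces the index pair $\{j,k\}$ coming from the prefactor to be exactly complementary to the $d\zeta$-indices appearing in the wedge power, and similarly on the conjugate side. Using the identity $d\zeta_j\w d\zeta_k\w\Omega_{jk}=d\zeta_0\w\ldots\w d\zeta_n$ to reassemble the contributions then produces, after collecting the combinatorial factor $(n-1)!$ from the wedge expansion, the shape $\pm(n-1)!\,a\w\bar a$ (with suitable powers of $\al$ and $|\zeta\w z|^2$). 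Combining with the extracted scalars gives the claimed explicit form with $c_n=\pm 1$.

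The main technical obstacle is the sign- and factorial-bookkeeping in that Grassmann expansion: because $d\zeta_j$ and $e_j$ anticommute as odd generators of $\Lambda$, each reordering costs a sign, and these must be matched against the sign convention built into the contractions $\frac{\partial}{\partial\zeta_k}\neg\frac{\partial}{\partial\zeta_j}\neg$ defining $\Omega_{jk}$. The cleanest organization is to exploit the invariance of every quantity in sight under the isometric automorphisms of $\P^n\times\P^n$ to reduce, at a generic point, to the model case $z=(1,0,\ldots,0)$; the prefactor then collapses onto the orthogonal complement spanned by $e_1,\ldots,e_n$, the expansion of $(d\zeta\cdot e\w d\bar\zeta\cdot e^*)^{n-1}$ simplifies dramatically, and a direct multilinear-algebra computation pins down $c_n$ up to sign, after which the general case follows by invariance.
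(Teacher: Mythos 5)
Your treatment of the first equality is correct and tracks the reductions the paper sets up in the prose just before the lemma (degree considerations to drop $\tilde I$, then projection to bidegree $(0,0)$ in $z$ to reach $\al\tilde\Theta_H-\al D\eta\w\dbar\sigma\sim\frac{|\zeta|^2|z|^2}{|\zeta\w z|^2}m$). For the second equality you take a genuinely different route. The paper reuses the interior-multiplication machinery already built in the proof of Theorem~\ref{stendumt}: letting $\delta_z,\delta_{\bar z}$ temporarily denote contraction with $z\cdot\partial/\partial\zeta$ and $\bar z\cdot\partial/\partial\bar\zeta$, it writes the Grassmann integral as $\tfrac{1}{\al^2 n(n+1)}\,\delta_{\bar z}\delta_z\delta_{\bar\zeta}\delta_\zeta\int_e(-\al\,d\zeta\cdot e\w d\bar\zeta\cdot e^*)_{n+1}$ via the $\delta\delta$-identity \eqref{dd}, then collapses $\int_e(\dots)_{n+1}$ to $\beta^{n+1}$ as in \eqref{ddd}, after which the four contractions of $\beta^{n+1}$ are an ordinary (non-Grassmann) form computation whose result is precisely $a\w\bar a$ up to constants. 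You instead carry the odd generators $e_j,e_j^*$ through and evaluate $\int_e$ by a Pl\"ucker-type expansion of the prefactor against the wedge power $(d\zeta\cdot e\w d\bar\zeta\cdot e^*)^{n-1}$, pinning the constant by reducing, via invariance, to the model point $z=(1,0,\ldots,0)$. Both arguments are sound and both leave the sign as $c_n=\pm 1$; the paper's route has the advantage of reusing \eqref{dd}--\eqref{ddd} verbatim and replacing the Grassmann sign-and-factorial bookkeeping (which you correctly identify as the main hazard) with a clean two-step contraction of the scalar form $\beta^{n+1}$, whereas your route is more self-contained but requires the extra combinatorial care you flag.
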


\begin{proof}
In fact, 
letting $\delta_z$ and $\delta_{\bar z}$ temporarily denote interior multiplication with 
$z\cdot(\partial/\partial\zeta)$ and $\bar z\cdot(\partial/\partial\bar \zeta)$, respectively,
by a computation as in the proof of Proposition~\ref{stendumt} above we have that
\begin{multline*}
\int_e 
\zeta\cdot e\w\bar\zeta\cdot e^*\w z\cdot e\w\bar z\cdot e^*\w (-\al  d\zeta\cdot e\w d\bar\zeta\cdot e^*)_{n-1}
=\\
\frac{1}{\al^2 n(n+1)}\delta_{\bar z}\delta_z\delta_{\bar\zeta}\delta_\zeta
\int_e (-\al  d\zeta\cdot e\w d\bar\zeta\cdot e^*)_{n+1}=\\
\frac{1}{\al^2 n(n+1)}\delta_{\bar z}\delta_z\delta_{\bar\zeta}\delta_\zeta\beta^{n+1}=
(n-1)!\al^{n-1} a\w\bar a.
\end{multline*}
\end{proof}

Since $c_{n-1}(D_Q)$ is closed it  follows that $\A$ is closed.
This can also easily be verified directly.
Summing up we get the formula
\begin{multline*}
-\frac{1}{2}K_0=\\
\Big[\log\Big(\frac{|\zeta\w z|^2}{|\zeta|^2|z|^2}\Big)
+\sum_{\ell=1}^{n-1}\frac{(-1)^{\ell}}{\ell}
\frac{(n-1)!}{(n-1-\ell)!\ell!}\Big(\frac{|\zeta\cdot \bar z|^2}{|\zeta|^2|z|^2}\Big)^\ell\Big]\w\A.
\end{multline*}
\smallskip

Finally we write this kernel in the  affine coordinates
$\zeta'=(\zeta_1',\ldots,\zeta_n')$ and $z'=(z_1',\ldots,z_n')$.
We  have the transformation rules
$$
|\zeta|^2\mapsto 1+|\zeta'|^2,\quad |z|^2\mapsto 1+|z'|^2,
\quad
\zeta\cdot\bar z\mapsto 1+\zeta'\cdot\bar z',\quad
|\zeta\w z|^2\mapsto  |\zeta'-z'|^2+|\zeta'\w z'|^2.
$$
Furthermore,
$$
a=\sum_1^n (\zeta_j'-z_j')\widehat{d\zeta_j'}=\sum_1^n(\zeta'_j-z_j')\frac{\partial}{\partial\zeta_j'}\neg d\zeta'_1\w
\ldots\w d\zeta'_n.
$$
In affine coordinates we therefore have (suppressing the primes for simplicity)
\begin{multline*}
-\frac{1}{2}K_0=\\
\Big[\log\Big(\frac{|\zeta-z|^2+|\zeta\w z|^2}{(1+|\zeta|^2)(1+|z|^2)}\Big)+\\
\sum_{\ell=1}^{n-1}\frac{(-1)^{\ell}}{\ell}
\frac{(n-1)!}{(n-1-\ell)!\ell!}  \Big(\frac{|1+\zeta\cdot \bar z|^{2}}{(1+|\zeta|^2)(1+|z|^2)}\Big)^\ell \Big]\w
\\
 \Big(\frac{(1+|z|^2)}{|\zeta-z|^2+|\zeta\w z|^{2n-2}}\Big)^{n-1}c_n\al^{n-1}(n-1)! a\wedge\bar a.
\end{multline*}

\section{Green currents for $Z$ in terms of defining functions}\label{kopp}

Let $f_1,\ldots, f_m$ be homogeneous polynomials in $\C^{n+1}$, let $Z^p$  be
the union of the irreducible components of their common zero set $Z\subset \P^n$ of
lowest  codimension $p$,  and let
$$
\|f(\zeta)\|^2 = \sum_1^m \frac{|f_j(\zeta)|^2}{|\zeta|^{2d_j}}.
$$
If all $d_j=d$, then
$$
\|f(\zeta)\|^2 =\frac{|f(\zeta)|^2}{|\zeta|^{2d}}=\frac{\sum_j |f_j(\zeta)|^2}{|\zeta|^{2d}}.
$$
We want to find a Green current for $[Z^p]$ expressed in the functions $f_j$.
To this end let $E_j$ be distinct trivial line bundles with basis elements $e_j$ and
consider $f=\sum f_j e_j$ as a section of the bundle
$$
E=L^{d_1}\otimes E_1 \oplus\cdots\oplus L^{d_m}\otimes E_m. 
$$

\smallskip
We first assume that $p=m$.  Since
$$
c(D_E)=c(L^{d_1})\w\cdots \w c(L^{d_m})=\bigwedge (1+d_j\omega)
$$
we have that   
\begin{equation}\label{bez}
c_m(D_E)=d_1\dots d_m \omega^m =(\deg Z) \omega^m.
\end{equation}
From Proposition~\ref{wmminusett} we get
the Green current $g=W_{m-1}$ for $Z$, solving
\begin{equation}\label{fullsnitt}
dd^c g+[Z]=d_1\cdots d_m \omega^m.
\end{equation}

\begin{prop}\label{hoppla}
This  Green current  for $[Z]$ has the form
$$
g=W_{m-1}=\log(1/\|f\|)\sum_{k=1}^{m}\frac{\gamma_k^1}{\|f\|^{2k}}+
\sum_{k=1}^{m}\frac{\gamma_k^2}{\|f\|^{2k}},
$$
where $\gamma_k^i$ are smooth forms that are  $\Ok(\|f\|^2)$.
If all $d_i=d$, then
\begin{multline}\label{polly}
g=\log(|z|^{d}/|f|)\sum_{\ell=0}^{m-1} d^{m-1-\ell}\omega^{m-1-\ell}\w
(dd^c\log|f|)^\ell +\\
\sum_{\ell=0}^{m-1}c_\ell
 d^{m-1-\ell}\omega^{m-1-\ell}\w
(dd^c\log|f|)^\ell.
\end{multline}
\end{prop}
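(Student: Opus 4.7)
The plan is to apply Proposition~\ref{wmminusett} to the bundle $E = \bigoplus_{j=1}^m L^{d_j}\otimes E_j$ with its induced Hermitian metric and to the section $f = \sum f_j e_j$; since $Z$ is a complete intersection ($p=m$), $f$ is transverse to the zero section, and Lemma~\ref{linje} together with the multiplicativity of Chern classes on direct sums already gives \eqref{bez} and hence \eqref{fullsnitt}. A Lagrange-multiplier calculation (minimize $\|\sigma\|^2$ subject to $f\cdot\sigma=1$) shows that $\|f\|^2 = \sum_j|f_j|^2/|\zeta|^{2d_j}$ as stated, and that the minimal dual section is $\sigma_j = \bar f_j/(|\zeta|^{2d_j}\|f\|^2)$, with $\|\sigma\|=1/\|f\|$.

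For part~(1) I substitute this data into \eqref{wdef} and track the $\|f\|$-denominators. Writing $\sigma=\tilde\sigma/\|f\|^2$ with $\tilde\sigma_j=\bar f_j/|\zeta|^{2d_j}$ smooth, the identity $\dbar\sigma=\dbar\tilde\sigma/\|f\|^2-\tilde\sigma\,\dbar\|f\|^2/\|f\|^4$ shows that each $\dbar\sigma$-factor contributes a denominator $\|f\|^2$ or $\|f\|^4$ with smooth numerator, while $T=\al\tilde\Theta_E$ is smooth. Since the expansion of $(T\pm M)_{m-1-\ell}\w(-M)_\ell$ (with $M=\al Df\w\dbar\sigma$) involves at most $m-1$ occurrences of $\dbar\sigma$, together with $\sigma$ in the prefactor $f\w\sigma$, the total denominator is $\|f\|^{2k}$ with $k\le m$. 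After regrouping terms by common denominator, the numerators $\gamma_k^i$ are real-analytic polynomial combinations of $f_j,\bar f_j,\partial f_j,d\bar f_j$, and the bound $|f_i\bar f_j|\le C\|f\|^2$ locally on $\P^n$, combined with the surplus $f$--$\bar f$-factors in each term, yields the $\Ok(\|f\|^2)$ property.

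For part~(2), when $d_j=d$ the bundle $E=L^d\otimes\C^m$ has $\al\tilde\Theta_E=d\omega\,\tilde I$, hence $T_j:=(\al\tilde\Theta_E)^j/j!=d^j\omega^j\,\tilde I_j$. Expanding $(T+M)_{m-1}$ and $(T-M)_{m-1-\ell}\w(-M)_\ell$ by the binomial theorem and substituting into \eqref{wdef}, every integral reduces to $d^{m-1-k}\omega^{m-1-k}\,\Omega_k$, where $\Omega_k:=\int_e f\w\sigma\w M_k\w\tilde I_{m-1-k}$. The $\log$-terms combine to $\log(|\zeta|^d/|f|)\sum_k d^{m-1-k}\omega^{m-1-k}\Omega_k$, while the remaining $\ell$-sums reduce through the inner combinatorial identity $\sum_{\ell=1}^n(-1)^\ell\binom{n}{\ell}/(2\ell)=-H_n/2$ (harmonic number), producing the constants $c_\ell$ of \eqref{polly}. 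The final step is the identification $\Omega_k=(dd^c\log|f|)^k$, which is the main obstacle.

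To handle this identification, for $m=2$, $k=1$ one can verify it directly: unpacking the fermionic integration shows $\Omega_1=(\al/|f|^4)(f_1\partial f_2-f_2\partial f_1)\w(\bar f_1d\bar f_2-\bar f_2 d\bar f_1)$, which coincides with $dd^c\log|f|$ after computing $dd^c\log|f|^2=2\al\,\partial\dbar\log|f|^2$ in coordinates. In general, the strategy is to expand $M^k/k!$ as a sum of $k\times k$ minors of the matrix $(\al D'f_i\w\dbar\sigma_j)_{i,j}$, contract with $f\w\sigma\w\tilde I_{m-1-k}$ as a trace-like sum over $(k{+}1)$-subsets of $\{1,\ldots,m\}$, and recognize the resulting expression as the $k$-fold wedge power of $(2\al)^{-1}dd^c\log|f|^2$. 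Alternatively, one can proceed by induction on $k$, exploiting $f\cdot\dbar\sigma=0$ (which follows from $\dbar(f\cdot\sigma)=0$) to telescope the fermionic integral.
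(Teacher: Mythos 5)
Your approach follows the paper's quite closely: same bundle $E=\bigoplus L^{d_j}\otimes E_j$, same formula for $\sigma$, substitution into \eqref{wdef}, and for part~(2) the flat curvature $\al\tilde\Theta_E=d\omega\,\tilde I$ and the eventual reduction to $(dd^c\log|f|)^\ell$. The harmonic-number evaluation $\sum_{\ell=1}^k(-1)^\ell\binom{k}{\ell}/(2\ell)=-H_k/2$ for the constants $c_\ell$ is a nice explicit addition that the paper leaves implicit.

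There is, however, a genuine gap in your accounting for part~(1). You decompose $\dbar\sigma=\dbar\tilde\sigma/\|f\|^2-\tilde\sigma\,\dbar\|f\|^2/\|f\|^4$ and declare that each $\dbar\sigma$-factor contributes ``a denominator $\|f\|^2$ or $\|f\|^4$ with smooth numerator.'' If the $\|f\|^4$-branch actually survived, your count would give denominators up to $\|f\|^{4(m-1)+2}$, not $\|f\|^{2m}$ as you assert, so the stated conclusion would not follow. What rescues the bookkeeping -- and what the paper's Lemma~\ref{l2} records -- is that the second term is precisely $-\sigma\,\dbar\|f\|^2/\|f\|^2$: it contains the odd $\Lambda$-factor $\sigma$, and after wedging with the prefactor $f\w\sigma$ in \eqref{wdef} it dies because $\sigma\w\sigma=0$. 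So \emph{only} the $\dbar\tilde\sigma/\|f\|^2$-branch contributes, each $\dbar\sigma$ costs exactly one power of $\|f\|^{-2}$, and the bound $k\le m$ follows. You need to state this cancellation; without it your estimate is simply false.

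For part~(2), you correctly isolate the crux as the identity $\Omega_k:=\int_e f\w\sigma\w M_k\w\tilde I_{m-1-k}=(dd^c\log|f|)^k$, but you only verify $m=2,k=1$ and sketch a ``minor-expansion'' or an inductive route. The paper's device is to fix a point, apply a unitary frame change and scaling so that $f\cdot e=e_1$ there; then $\sigma=e_1^*$, $f\w\sigma=e_1\w e_1^*$, $\dbar\sigma_1=0$, and $dd^c\log|f|=\al\sum_{j\ge2}\partial f_j\w\dbar\bar f_j$. The fermionic integral then reduces to a determinant of the $(1,1)$-forms $\partial f_i\w\dbar\bar f_j$ over $(k)$-subsets of $\{2,\dots,m\}$, and the anticommutation relation $\partial f_i\w\dbar\bar f_j\w\partial f_j\w\dbar\bar f_i=-\partial f_i\w\dbar\bar f_i\w\partial f_j\w\dbar\bar f_j$ converts the determinant into the diagonal product, matching $(\al\sum_j\partial f_j\w\dbar\bar f_j)^k$. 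Your minor-expansion idea is really this same computation, but you should actually carry out the sign bookkeeping (and the pointwise normalization greatly simplifies it); as written, the ``main obstacle'' you flag remains unresolved.
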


One can check that the second sum in \eqref{polly} is  closed,
so that already the first sun  is a Green current for $[Z]$. 
This current is  the well-known 
Levin form $L(f)$, cf., p.\ 30 in \cite{BY1}.
It is of course quite easy to verify directly that
$$
dd^c L(f)+[Z]=d^m\omega^m.
$$

\begin{proof}[Proof of Proposition~\ref{hoppla}]
The formula \eqref{wdef}  gives  an explicit expression for
$g$ as soon as we  have explicit
expressions for the  associated sections  $\sigma$, $D_Ef$ and $\Theta_E$.
To begin with 
\begin{equation}\label{citrus}
\sigma= \frac{1}{\|f\|^2} \sum_1^m \frac{\overline{f_j(\zeta)}}{|\zeta|^{2d_j}} e_j^*,
\end{equation}
where  $e_i^*$ are the dual basis elements of $E_i^*$, and
$$
\al\widetilde\Theta_E =\sum_i\al\Theta_{L^{d_i}}\w
e_i\w e_i^*=\sum_1^m d_i\omega\w e_j\w e_i^*=\omega\w\sum_i d_i e_i\w e_i^*.
$$
If all $d_j=d$, then 
\begin{equation}\label{citrusd}
\sigma= \frac{\sum_1^m \overline{f_j(\zeta)}e_j^*}{|f(\zeta)|^{2}}
\end{equation}
and 
$$
\al\widetilde\Theta_E =\sum_i\al\Theta_{L^d}\w
e_i\w e_i^*=\sum_1^m d\omega\w e_j\w e_i^*=d \omega\w\tilde I.
$$

\begin{lma}\label{l2}
We have  that 
$$
Df=\sum_1^m |z|^{2d_i}\partial\frac{f_i}{|z|^{2d_i}}\w e_i, \quad 
\dbar\sigma =\frac{1}{\|f\|^2}\sum_1^p\dbar\frac{\bar f_i}{|z|^{2d_i}}\w e_i^*+ \cdots.
$$
where $\cdots$ denote terms that contain the factor $\sigma$.
If all $d_j=d$, then 
$$
Df=\sum_1^m d f_j\w e_j+\cdots, \quad 
\dbar\sigma=\frac{1}{|f(\zeta)|^2}\sum_1^m d\bar f_j \w e^*+\cdots,
$$
where $\cdots$ denote terms that contain the factor $\sigma$ or $f$.
\end{lma}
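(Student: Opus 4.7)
The plan is to compute $Df$ and $\dbar\sigma$ directly by combining Lemma~\ref{linje} with the Leibniz rule, and then to absorb into the ``$\cdots$'' every summand that carries an explicit wedge factor of $f$ or $\sigma$.

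First I would note that $E=\bigoplus_j L^{d_j}\otimes E_j$ is an orthogonal direct sum of Hermitian line bundles, with each $E_j$ trivial and admitting the global parallel holomorphic frame $e_j$. The Chern connection on $E$ therefore splits accordingly, and in the tilde-representation of Section~\ref{plsec} one has $De_j=0$. Since $f$ is holomorphic, $\dbar f=0$, hence $Df=\sum_j(D'_{L^{d_j}}f_j)\wedge e_j$, and Lemma~\ref{linje} rewrites each factor as $|z|^{2d_j}\partial(f_j/|z|^{2d_j})$, giving the first formula. For the equal-degree version, Lemma~\ref{linje} expands this further as $\partial f_j-d f_j\,\partial\log|z|^2$; summing the second piece over $j$ yields $-d\,(\partial\log|z|^2)\wedge f$, which explicitly contains $f=\sum_j f_j e_j$ as a wedge factor and hence is absorbed into ``$\cdots$''.

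For $\dbar\sigma$ I would apply the Leibniz rule to $\sigma=u/\|f\|^2$ with $u=\sum_j(\bar f_j/|\zeta|^{2d_j})\,e_j^*$, noting the key identity $u=\|f\|^2\sigma$. This produces
\[
\dbar\sigma=\dbar(1/\|f\|^2)\wedge u+\frac{1}{\|f\|^2}\sum_j\dbar\Big(\frac{\bar f_j}{|\zeta|^{2d_j}}\Big)\wedge e_j^*,
\]
and the first term equals $\|f\|^2\dbar(1/\|f\|^2)\wedge\sigma$, so it carries $\sigma$ as a wedge factor and drops into ``$\cdots$''. The equal-degree version proceeds identically, starting from $\sigma=(\sum_j\bar f_j e_j^*)/|f|^2$ and using $\dbar\bar f_j=d\bar f_j$ (valid since $f_j$ is holomorphic).

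There is no real obstacle beyond careful bookkeeping: once Lemma~\ref{linje} is applied and the Leibniz rule carried out, every correction term naturally picks up a wedge factor of either $f$ (for $Df$) or $\sigma$ (for $\dbar\sigma$), which is exactly what the ``$\cdots$'' in the statement permits. The one observation that makes the $\dbar\sigma$ computation transparent is the rewriting $u=\|f\|^2\sigma$, which manifestly exhibits $\sigma$ in the first Leibniz term.
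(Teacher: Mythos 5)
Your proposal is correct and follows exactly the route sketched in the paper's own (very terse) proof: split $E$ as an orthogonal sum of the line bundles $L^{d_j}\otimes E_j$ so that $Df=\sum_j D'_{L^{d_j}}f_j\wedge e_j$, apply Lemma~\ref{linje}, and for $\dbar\sigma$ use the Leibniz rule on \eqref{citrus} and observe that $u=\|f\|^2\sigma$ forces the leading Leibniz term into the ``$\cdots$''. You merely spell out the bookkeeping that the paper leaves implicit; there is no gap and no genuinely different idea.
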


\begin{proof}
Since 
$$
Df=\sum_i D_{L^{d_j}}f_i\w e_j,
$$
and $f$ is holomorphic, the first equality  follows from Lemma~\ref{linje}.
The second  equality is immediate in view of \eqref{citrus}. 
The two remaining equalities follow in a similar way. 
\end{proof}

Notice that $f\w\sigma$ is of the form $\alpha/\|f\|^2$ where $\alpha$
is a smooth form that is $\Ok(\|f\|^2)$.
Because of the presence of this factor in \eqref{wdef} we can insert the right hand
sides in Lemma~\ref{l2} into   \eqref{wdef}, and we then get the first
formula in Proposition~\ref{hoppla}

Now assume that $d_i=d$.
At a given  fixed point, one can assume, after an isometric
transformation, and by  homogeneity, that $f\cdot e=e_1$, so that, e.g., 
$$
dd^c\log|f|=\al \sum_2^m df_i\w d\bar f_i, \quad f\w\sigma=e_1\w e_1^*.
$$
One then obtains \eqref{polly} by a straight-forward computation.    
\end{proof}

\begin{remark}
One can actually reduce to the case when  all $d_j$ are the same. 
One  just replaces  $f_j$ by  $f_j^{r_j}$ so that $r_jd_j=d$. Then they define
the cycle $r_1\cdots r_m$ times the cycle defined by $f_j$; this follows,
e.g., from \eqref{bez}, and hence one  get a Green current for the original
cycle by just dividing by this number.
\end{remark}

Let us finally consider the case when $p<m$. Then 
$$
dd^c W_{p-1}+[Z^p]=c_p(D_E)-C_p(D_Q),
$$
and unfortunately $C_p(D_Q)$ is only the pushforward of a smooth form. However,
if we take
$$
G=W_{p-1}-\int_{\P^n} K\w C_p(D_Q)
$$
it follows from the Koppelman formula \eqref{kman}, applied to $\theta=C_p(D_Q)$,
that 
$$
dd^cG+[Z^p]=c_p(D_E)-\int_{\P^n} P\w c_p(D_Q).
$$
Thus we get a Green current whose leading term $W_{p-1}$ is quite explicit;
by a similar computation as above we find that 
\begin{equation}\label{slut}
W_{p-1}=\log(1/\|f\|)\sum_{k=1}^{p}\frac{\gamma^1_k}{\|f\|^{2k}}+
\sum_{k=1}^{p}\frac{\gamma_k^2}{\|f\|^{2k}},
\end{equation}
where $\gamma_k^i$ are smooth forms that are  $\Ok(\|f\|^2)$.

\begin{remark}
The point above was that we had quite explicit currents  $w$ and $\gamma$
such  that 
\begin{equation}\label{melog}
dd^cw+[Z^p]=\gamma,
\end{equation}
where $w$ is locally integrable, smooth outside $Z$ and
$\gamma$ is the push-forward of a smooth form.
Such currents are also provided
by a  variant of King's formula due to Meo,  \cite{Meo1} and \cite{Meo2};
in fact one can take
$$
w=-\log\|f\|\big (dd^c\log\|f\|)^{p-1}{\bf 1}_{X\setminus Z}\big)
$$
and 
$$
\gamma=(dd^c\log\|f\|)^p{\bf 1}_{X\setminus Z}.
$$
For a simple proof,  see (the proof of) Proposition~4.1 in \cite{A3}.
From that proof it follows that $w$ is of logarithmic type along $Z$ and that
$\gamma$ is the push-forward of a smooth closed form.
However, this current $w$ is not identical to \eqref{slut}; e.g., $W_{p-1}$ is positive,
whereas $w$ is not. 
\end{remark}

\def\listing#1#2#3{{\sc #1}:\ {\it #2},\ #3.}

\end{document}